\documentclass[11pt,a4paper]{article}
\usepackage[utf8]{inputenc}
\usepackage{amsmath,amssymb,amsthm}
\usepackage{geometry}
\usepackage{enumitem}
\usepackage{mathrsfs}
\usepackage{booktabs}  
\usepackage{bm}
\usepackage{tikz-cd}
\usepackage[mathscr]{euscript}
\usepackage{authblk,orcidlink}   
\usepackage{romannum}     
\usepackage{comment}
\usepackage{float}
\usepackage{hyperref}

\usepackage[backend=biber,        
            style=numeric,        
            sorting=nty,         
            doi=true,             
            isbn=false,           
            url=false,             
            giveninits=true      
            ]{biblatex}
\DeclareFieldFormat[article,incollection,inproceedings,inbook]{title}{#1\isdot}   
\DeclareFieldFormat{booktitle}{\mkbibemph{#1}\isdot}

\newtheorem{theorem}{Theorem}[section]
\newtheorem{lemma}[theorem]{Lemma}
\newtheorem{proposition}[theorem]{Proposition}
\newtheorem{corollary}[theorem]{Corollary}
\newtheorem{definition}[theorem]{Definition}
\newtheorem{remark}[theorem]{Remark}
\newtheorem{example}[theorem]{Example}

\title{Counting Invariants for Torus Quotients under FPF Group Actions}
\author{Xue Zhang}
\date{}

\begin{document}
\pagenumbering{arabic} 

\maketitle

\footnotetext{{\itshape E-mail address:} \texttt{zhangxue@cau.edu.cn}}
\footnotetext{Department of Applied Mathematics, China Agricultural University, Beijing 100083, China}

\begin{abstract}
Let $G$ be a finite group admitting a rational fixed-point-free (FPF) representation $V$, with a $G$-invariant full-rank lattice $L\subset V$. We study the induced $G$-action on the complex torus $V_{\mathbb{C}}/L$ and the quotient orbifold $V_{\mathbb{C}}/(G\ltimes L)$. We prove that every FPF group admits a unique irreducible rational FPF representation. We derive closed-form formulas for three core geometric invariants: the number of non-free points of the torus action, the number of singular points of the quotient orbifold, and its orbifold Euler characteristic. 
\end{abstract}

\textbf{2020 Mathematics Subject Classification:} 20C10 (primary), 20H15, 57R18 (secondary)


\section*{Introduction}
Finite fixed-point-free (FPF) groups are well known from the classification of spherical space-form groups. These are precisely the finite groups admitting an FPF complex representation $\rho$, i.e., $\det(\rho(g)-\mathrm{id})\neq 0$ for every non-identity $g\in G$. Their structure is completely understood: every FPF group satisfies the so-called $pq$-condition, and falls into one of six families. The classification of FPF groups was completed by Vincent, Zassenhaus, and Wolf, and is recorded in Wolf's monograph \cite{wolf2011spaces}. For a modern survey on the spherical space-form problem, see Hambleton \cite{hambleton2015s}.

The action of the Galois group $\operatorname{Gal}(\overline{\mathbb{Q}}/\mathbb{Q})$ on the irreducible complex characters of $G$ guarantees the existence of rational FPF representation of an FPF group \cite[Thm. 70.15]{curtis1966rep}. As our first main result, we establish the following property of rational FPF representations:
\begin{theorem}\label{thm:int01u}
Let $G$ be an FPF group. Then $G$ admits a unique irreducible rational FPF representation up to isomorphism.
\end{theorem}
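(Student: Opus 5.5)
The approach is to show that the character of a rational FPF representation is determined by its dimension alone. Uniqueness then follows from orthogonality of characters of non-isomorphic irreducible rational representations. Existence comes from the rational FPF representation guaranteed by \cite[Thm. 70.15]{curtis1966rep}, as recalled in the introduction.

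\emph{Existence.} Let $V$ be a rational FPF representation of $G$ and decompose $V=\bigoplus_i W_i$ into $\mathbb{Q}$-irreducible summands (Maschke). If some $W_i$ had a nonzero vector fixed by some $g\neq 1$, that vector would also be fixed in $V$, contradicting the FPF property. So every $W_i$ is an irreducible rational FPF representation.

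\emph{Character computation.} Let $W$ be any rational FPF representation of dimension $n$, let $g\in G$ have order $m$, and put $C=\langle g\rangle$. For every $1\le k<m$ the element $g^k\neq 1$ has no eigenvalue $1$ on $W\otimes\mathbb{C}$. Hence every eigenvalue $\zeta$ of $g$ satisfies $\zeta^k\neq1$ for $0<k<m$, so $\zeta$ is a primitive $m$-th root of unity. Since $W$ is defined over $\mathbb{Q}$, its character on $C$ is $\mathrm{Gal}(\mathbb{Q}(\zeta_m)/\mathbb{Q})$-invariant. Therefore all primitive $m$-th roots occur with equal multiplicity, and $W|_C$ is a multiple of the unique faithful irreducible rational representation $\mathbb{Q}(\zeta_m)$ of $C$. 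Consequently
\[
\chi_W(g)=\frac{n}{\varphi(m)}\sum_{\zeta\ \text{primitive}}\zeta=\frac{n\,\mu(m)}{\varphi(m)},
\]
where $\mu$ is the Möbius function and $\varphi$ is Euler's totient (used here just as notation in the proposal). This shows that $\chi_W$ depends only on $n$ and on the orders of the elements. In particular, any two rational FPF representations $W,W'$ satisfy $\chi_{W'}=\tfrac{\dim W'}{\dim W}\chi_W$.

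\emph{Uniqueness.} Suppose $W,W'$ are irreducible rational FPF representations. By the previous step $\langle\chi_W,\chi_{W'}\rangle=\tfrac{\dim W'}{\dim W}\langle\chi_W,\chi_W\rangle>0$. Now let $\psi$ be an irreducible complex character occurring in $W\otimes\mathbb{C}$. The characters of non-isomorphic $\mathbb{Q}$-irreducible representations are positive multiples of sums over disjoint Galois orbits of such $\psi$, so they are orthogonal. Hence $W\cong W'$.

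The only point needing care, and the one I would check most carefully, is this last orthogonality fact. It is the standard correspondence between $\mathbb{Q}$-irreducible representations and Galois orbits of complex irreducible characters (with Schur index multiplicities), which guarantees disjoint complex constituents for non-isomorphic rational irreducibles. The rest of the argument is elementary.
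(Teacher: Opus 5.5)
Your proof is correct, and it takes a genuinely different and much shorter route than the paper. The paper proves the equivalent single-Galois-orbit statement (Theorem~\ref{thm:fpfweiyi}) through the Vincent--Zassenhaus--Wolf classification. For Types I--V it writes $G=C_m\rtimes_\alpha H$ and uses Mackey's little-group method to identify the irreducible complex FPF characters as $\operatorname{Ind}_{C_m\rtimes K}^{G}(\lambda\boxtimes\tau)$. It then inducts using the linear disjointness of $\mathbb{Q}(\zeta_m)$ and $\mathbb{Q}(\zeta_{|K|})$. The base cases (cyclic and generalized quaternion $2$-groups, the binary polyhedral groups and their central extensions) are checked against character tables, and Type VI is treated separately.

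You replace all of this with two steps. First, Lemma~\ref{lem33detai} applied to every element gives $\chi_W(g)=n\,\mu(\operatorname{ord} g)/\varphi(\operatorname{ord} g)$, so all rational FPF characters are proportional. Second, non-isomorphic rational irreducibles have orthogonal characters. The step you flagged is sound, and it needs no Schur indices: $\langle\chi_W,\chi_{W'}\rangle=\dim_{\mathbb{Q}}\operatorname{Hom}_{\mathbb{Q}G}(W,W')$, which vanishes by Schur's lemma when $W\not\cong W'$.

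Your route is classification-free and shows that any finite group has at most one irreducible rational FPF representation. It also gives a stronger fact: every rational FPF representation is isotypic, $V\cong W_0^{\oplus k}$, so $\dim W_0\mid\dim_{\mathbb{Q}}V$, which sharpens Proposition~\ref{corozhcdimn}. Combined with Lemma~\ref{lemgalcorr}, it recovers the single-orbit formulation. What the paper's approach gives in return is an explicit description of the irreducible complex FPF characters as induced characters, together with their character fields, which your argument does not provide.
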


Classical work focuses on FPF actions on spheres, whereas comparatively little attention has been paid to FPF group actions on complex tori. In this paper we study the analogous problem on complex tori constructed from rational representations. Let $L\subset V$ be a $G$-invariant full-rank lattice in a rational FPF representation $V,$ and let $V_{\mathbb{C}}/L$ be the associated complex torus. Since $\det(g-\mathrm{id})\neq 0$ for $g\neq e,$ each non-identity element has finitely many fixed points, and the quotient orbifold $V_{\mathbb{C}}/(G\ltimes L)$ has only finitely many singular points. Such quotients are flat orbifolds, and their structure is intimately connected with flat manifolds and Bieberbach groups \cite{charlap1986b}. Quotients of complex tori by finite groups acting freely in codimension two have recently been characterized by a Chern-class vanishing condition \cite{claudon2022n,graf2020}, building on the projective case treated in \cite{greb2016}. For cyclic $G$, Fujiki \cite{Fujiki1974onr} studied resolutions of cyclic quotient singularities, and \cite{fulton1993int,pham2012} treat toric resolutions of such quotients. The orbifold Euler characteristic is a basic numerical invariant of the quotient.

The main goal of this paper is to count the geometric invariants of an FPF action on the torus $V_{\mathbb{C}}/L.$ Using M\"{o}bius inversion on the subgroup lattice of $G$ (Rota \cite{rota1964}), the number of non-free points of $G$-action is
\begin{equation*}
|F| = -\sum_{\{e\}<H\leq G} \mu_G\bigl(\{e\}, H\bigr)\cdot  |\operatorname{Fix}(H)|.
\end{equation*}
On the one hand, we note an important property of FPF groups: all non-cyclic subgroups $H$ of $G$ satisfy $\mu_G(\{e\},H)=0$ (Lemma~\ref{lemmuvsh34}). This reduces the above formula to counting only the cyclic subgroups of $G.$ On the other hand, 
$|\operatorname{Fix}(H)|$ depends only on the order of $H$ and equals $\Phi_{d}(1)^{\dim_{\mathbb{Q}}V/\varphi(d)},$ where $d=|H|$ and $\Phi_d$ denotes the $d$-th cyclotomic polynomial (Lemma~\ref{lem33detai}). Combining these two facts, we obtain closed formulas for three quantities that depend only on $\dim_{\mathbb{Q}}V$ and $G,$ and not on the choice of the lattice $L$:
\begin{itemize}
  \item the number of non-free points of the $G$-action on the torus (Theorem~\ref{thmnff35});
  \item the number of singular points of the quotient (Theorem~\ref{thmpif36});
  \item the orbifold Euler characteristic of the quotient (Corollary~\ref{cor:eulerg37}).
\end{itemize}

As examples, we carry out explicit computations for cyclic groups and generalized quaternion groups, since  the counts of their cyclic subgroups are easily obtained. We then give closed-form expressions depending only on the prime divisors of $m.$ Finally, we determine the cyclic subgroup counts for all FPF groups of Types \Romannum{1}--\Romannum{6}, this makes the formulas of Section~\ref{sec:acts} fully explicit for every FPF group.

The paper is organized as follows. Section~\ref{sec:1classfpfg} reviews the classification of FPF groups from Wolf \cite{wolf2011spaces} and recalls some crucial properties. Section~\ref{sec:2intfpfg} introduces integral FPF representations and establishes the structural theorems mentioned above. Section~\ref{sec:acts} proves the
counting formulas via M\"{o}bius-function techniques. Section~\ref{sec4:exams} works out the cyclic and generalized quaternion examples. Section~\ref{sec5:cycfpf} computes the cyclic subgroup counts for all six types of groups.

\textit{Acknowledgments.} The author acknowledges the use of DeepSeek V4 in the preparation of this manuscript. Specifically, DeepSeek V4 was used for language polishing and for generating illustrative examples to verify Theorem \ref{thm:fpfweiyi}. All mathematical arguments and conclusions presented in this work were independently completed by the author. The author take full intellectual responsibility for the content of this paper.

\section{Classification of FPF groups}\label{sec:1classfpfg}

In this section, we review the classification of finite groups admitting FPF representations, following the treatment in Wolf \cite{wolf2011spaces}. This classification is fundamental to the solution of the spherical space form problem, as established by Vincent, Zassenhaus, and Wolf.

\begin{definition}
Let $G$ be a group and let $V$ be a finite-dimensional complex vector space. A complex representation $\rho\colon G \to\operatorname{GL}(V)$ is called FPF if for every non-identity element $g \in G,$ $\det(\rho(g) - \mathrm{id})\ne 0,$ equivalently, the origin of $V/G$ is an isolated singularity. A finite group $G$ is called an FPF group if it admits an FPF complex representation.

A complex character $\chi$ of $G$ is said to be \textit{FPF} if $\chi$ is the character of some FPF complex representation of $G$.
\end{definition}

For example, every finite subgroup of $\operatorname{SU}(2)$ is FPF via its natural $\mathbb{C}^2$ representation. Below we give a useful lemma for the FPF condition.

\begin{lemma}\label{lem:21fpfpbf}
Let $\chi$ be a complex character of $G$. Then $\chi$ is FPF if and only if for every prime $p$ and every element $g\in G$ of order $p$,
\begin{equation}\label{lem21fpfs}
\sum_{j=0}^{p-1}\chi(g^j)=0.   
\end{equation}
\end{lemma}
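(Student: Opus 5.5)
The plan is to reduce the FPF condition to a statement about eigenvalues of elements of prime order, and then read that statement off from a character sum.

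\emph{Reduction to prime order.} If $\rho$ affords $\chi$ and $g\neq e$ has order $n$, pick a prime $p\mid n$. Then $h=g^{n/p}$ has order $p$. Any vector $v$ fixed by $\rho(g)$ is also fixed by $\rho(h)$. Conversely, only the prime-order elements need to be tested. Hence $\chi$ is FPF if and only if $\rho(g)$ has no eigenvalue $1$ for every $g$ of prime order $p$.

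\emph{The character sum.} Fix such a $g$. Since $\rho(g)$ has finite order, it is diagonalizable, with eigenvalues $p$-th roots of unity $\zeta^{a_1},\dots,\zeta^{a_n}$, where $\zeta=e^{2\pi i/p}$. Then
\[
\sum_{j=0}^{p-1}\chi(g^j)=\sum_{k=1}^{n}\sum_{j=0}^{p-1}\zeta^{ja_k}.
\]
For each $k$, the inner sum equals $p$ if $\zeta^{a_k}=1$ and $0$ otherwise. So the total is $p$ times the multiplicity of the eigenvalue $1$ of $\rho(g)$, that is, $p\dim V^{\langle g\rangle}$. Equivalently, it is $p\langle \chi|_{\langle g\rangle},1\rangle$.

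\emph{Conclusion.} The sum vanishes exactly when $\rho(g)$ has no fixed nonzero vector. Combining this with the reduction step gives the equivalence in Lemma~\ref{lem:21fpfpbf}, in both directions.

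There is no real obstacle. The only point needing care is the reduction to prime order: a nonzero vector fixed by $\rho(g)$ is automatically fixed by the prime-order power $\rho(h)$. This is what lets the condition on prime-order elements imply $\det(\rho(g)-\mathrm{id})\neq0$ for all $g\neq e$.
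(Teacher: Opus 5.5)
Your proof is correct and follows the paper's argument. For $g$ of prime order $p$, $\sum_{j=0}^{p-1}\chi(g^j)$ equals $p$ times the multiplicity of the eigenvalue $1$ of $\rho(g)$, and the FPF condition reduces to prime-order elements because any vector fixed by $\rho(g)$ is also fixed by $\rho(g^{n/p})$; you simply spell out the reduction step that the paper states in one line.
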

\begin{proof}
Let $g$ be an element of order $p$. The eigenvalues $\lambda_1,\dots,\lambda_d$ of $\rho(g)$ are all $p$-th roots of unity, where $d=\chi(1)$. Then
\[\sum_{j=0}^{p-1}\chi(g^j)=\sum_{i=1}^d\sum_{j=0}^{p-1}\lambda_i^j=\#\{i\colon \lambda_i=1\}\cdot p.\]
Thus \eqref{lem21fpfs} holds if and only if $g$ has no eigenvalue equal to $1$. Now, a non-identity element has eigenvalue $1$ if and only if some element of prime order has eigenvalue $1$. Hence the assertion follows.
\end{proof}

Before presenting the full classification of FPF groups, we state two key properties of FPF groups, proved by character theory and Sylow subgroup analysis in Wolf’s monograph \cite{wolf2011spaces}.

\begin{theorem}\label{thm22pqsylow}
Let $G$ be an FPF group. The following conditions, combining Theorem~5.3.1 and Theorem~5.3.2 of \cite{wolf2011spaces}, hold for $G$:
\begin{enumerate}[label=(\arabic*)]
\item ($pq$-condition) Every subgroup of $G$ with order $pq$ for arbitrary primes $p,q$ is cyclic. 
\item (Sylow subgroup restriction) For any odd prime $p,$ all Sylow $p$-subgroups of $G$ are cyclic; the Sylow $2$-subgroup of $G$ is either cyclic or a generalized quaternion group $Q_{2^a}$ with integer $a\geq 3.$
\end{enumerate}
\end{theorem}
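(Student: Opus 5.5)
The plan is to reduce everything to small subgroups, using the fact that the FPF property passes to subgroups. If $\rho$ is an FPF representation of $G$ and $H\le G$, then $\rho|_H$ is an FPF representation of $H$, since the condition $\det(\rho(h)-\mathrm{id})\neq 0$ is checked element by element. It therefore suffices to show two things. First, a group of order $pq$ admitting an FPF representation is cyclic. Second, a $p$-group admitting an FPF representation is cyclic or generalized quaternion. Part (2) then follows by applying the second fact to a Sylow $p$-subgroup.

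\textbf{Step 1: order $pq$.} Let $H$ have order $pq$ and let $\rho$ be FPF on $H$. Decompose $\rho$ into irreducibles; it is enough to show that \emph{every} irreducible representation of a non-cyclic $H$ has a nonzero vector fixed by some non-identity element.

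If $H$ is abelian and non-cyclic, so $H\cong C_p\times C_p$, let $\lambda$ be a linear character. Its image is cyclic of order at most $p$, so $\ker\lambda$ is nontrivial, and any non-identity element of $\ker\lambda$ fixes the corresponding eigenline.

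If $H$ is nonabelian, then $H\cong C_q\rtimes C_p$ with $p\mid q-1$. Its linear characters are trivial on $C_q$, so they are fixed by elements of order $q$. Its remaining irreducibles are $p$-dimensional and induced from $C_q$. In such an induced representation, an element $g$ of order $p$ acts as a monomial matrix cycling the $p$ lines. Since $g^p=1$, the product of the scalar entries is $1$, so the characteristic polynomial is $t^p-1$. Hence $1$ is an eigenvalue of $g$. Equivalently, one can verify through Lemma~\ref{lem:21fpfpbf} that the character sum over $\langle g\rangle$ is nonzero.

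Either way, no FPF representation exists, which proves (1).

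\textbf{Step 2: $p$-groups.} Let $P$ be a $p$-group admitting an FPF representation. I will show that $P$ has a unique subgroup of order $p$. Choose $z\in Z(P)$ of order $p$. For any element $x$ of order $p$ with $x\notin\langle z\rangle$, the subgroup $\langle x,z\rangle$ is elementary abelian of order $p^2$. That contradicts Step 1 with $q=p$. So $\langle z\rangle$ is the only subgroup of order $p$.

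Then invoke the classical theorem (Burnside; see Brown or Gorenstein) that a $p$-group with a unique subgroup of order $p$ is cyclic when $p$ is odd, and cyclic or generalized quaternion $Q_{2^a}$ with $a\ge 3$ when $p=2$. This gives (2).

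\textbf{Main obstacle.} The substantive external input is Burnside's classification of $p$-groups with a unique subgroup of order $p$. I would cite it rather than reprove it. If a self-contained argument were required, I would first treat the abelian case, where the basis theorem gives cyclicity directly. I would then induct on $|P|$ using a maximal subgroup, which is cyclic or quaternion by induction, and analyze the possible extensions. That case analysis, especially for $p=2$, is the delicate part.
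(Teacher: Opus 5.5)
Your proposal is correct and is essentially the standard argument behind Wolf's Theorems~5.3.1 and~5.3.2, which the paper only cites without giving a proof: restrict to subgroups, rule out $C_p\times C_p$ and $C_q\rtimes C_p$ by finding the eigenvalue $1$ in every irreducible, then apply Burnside's classification of $p$-groups with a unique subgroup of order $p$. The only tacit point is that an FPF representation must be taken nonzero, so that it has an irreducible constituent; the paper's definition implicitly assumes the same convention.
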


\begin{remark}
The generalized quaternion group $Q_{4m}$ of order $4m$ ($m\ge 2$) is defined by generators and relations
\[Q_{4m}=\langle a,b\mid a^{2m}=1, b^2=a^m,bab^{-1}=a^{-1}\rangle.\]
In Theorem~\ref{thm22pqsylow}(2), $Q_{2^a}$ denotes the special case $m=2^{a-2}.$
\end{remark}

FPF groups fall into six types, classified according to their Sylow-subgroup structure: Types \Romannum{1}-\Romannum{4} are solvable, and Types \Romannum{5}-\Romannum{6} are non-solvable, built upon the binary icosahedral group $\operatorname{SL}_2(\mathbb{F}_5).$ The complete list is as follows.

\begin{theorem}[Vincent, Zassenhaus, Wolf, cf. {\cite[Chapter~6]{wolf2011spaces}}]\label{thm:FPF_classifi}
A finite group $G$  is an FPF group if and only if $G$ is isomorphic to one of the following six types.

\medskip 

\noindent\textbf{Type \Romannum{1}.} $G$ has generators $A$ and $B$ with relations
\[A^m = B^n = 1, \quad BAB^{-1} = A^r,\]
where $m,n \geq 1,$ $((r-1)n, m) = 1,$ and $r^n \equiv 1 \pmod{m}.$ Let $d$ be the order of $r$ in $(\mathbb{Z}/m\mathbb{Z})^\times.$ Then $d$ divides $n,$ and $n/d$ is divisible by every prime divisor of $d.$ In this case $G$ has order $mn$ and every Sylow subgroup is cyclic.

\medskip 

\noindent\textbf{Type \Romannum{2}.} $G$ has generators $A,$ $B,$ and $R$ with relations as in Type \Romannum{1} and additionally
\[R^2 = B^{n/2}, \quad RAR^{-1} = A^l, \quad RBR^{-1} = B^k,\]
where $l^2 \equiv r^{k-1} \equiv 1 \pmod{m},$ $n=2^uv,u\geq 2,$ $k\equiv -1 \pmod{2^u},$ and $k^2 \equiv 1 \pmod{n}.$ 
In this case $G$ has order $2mn$ and the Sylow $2$-subgroups are generalized quaternion.

\medskip

\noindent\textbf{Type \Romannum{3}.} $G$ has generators $A,$ $B,$ $P,$ and $Q$ with relations as in Type \Romannum{1} and additionally
\[P^4 = 1, \hspace{0.5em} P^2 = Q^2=(PQ)^2, \hspace{0.5em} AP = PA, \hspace{0.5em} AQ = QA, \hspace{0.5em} 
BPB^{-1} = Q, \hspace{0.5em} BQB^{-1} = PQ,\]
where $n \equiv 3 \pmod{6}.$ In this case $G$ has order $8mn$ and the Sylow $2$-subgroup is $Q_8.$

\medskip

\noindent\textbf{Type \Romannum{4}.} $G$ has generators $A,$ $B,$ $P,$ $Q,$ and $R$ with relations as in Type \Romannum{3} and additionally
\[R^2 = P^2, \quad RPR^{-1} = QP, \quad RQR^{-1} = Q^{-1}, \quad RAR^{-1} = A^l, \quad RBR^{-1} = B^k,\]
where $k^2 \equiv 1 \pmod{n},$ $k \equiv -1 \pmod{3},$ and $r^{k-1}\equiv l^2 \equiv 1 \pmod{m}.$ In this case $G$ has order $16mn$ and the Sylow $2$-subgroup is $Q_{16}.$

\medskip

\noindent\textbf{Type \Romannum{5}.} $G = G_1 \times \operatorname{SL}_2(\mathbb{F}_5),$ where $G_1$ is a group of Type \Romannum{1} with $(|G_1|,30)=1.$

\medskip

\noindent\textbf{Type \Romannum{6}.} $G = \langle G_2, S \rangle,$ where $G_2=G_1 \times \operatorname{SL}_2(\mathbb{F}_5)$ is a normal subgroup of index $2$ and Type \Romannum{5}, $S^2 = -I\in\operatorname{SL}_2(\mathbb{F}_5),$ and $SAS^{-1} = \theta(A)$ for all $A\in \operatorname{SL}_2(\mathbb{F}_5),$ where $\theta$ is the automorphism of $\operatorname{SL}_2(\mathbb{F}_5)$ given by conjugation by $\begin{pmatrix} 0 & -1 \\ 2 & 0 \end{pmatrix}.$ Moreover, $S$ normalizes $G_1.$
\end{theorem}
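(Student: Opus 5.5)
The plan is to prove the two directions separately, using Theorem~\ref{thm22pqsylow} as the main structural input for necessity and explicit induced representations for sufficiency.

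\textbf{Necessity: solvable groups.} Let $G$ be FPF. By Theorem~\ref{thm22pqsylow}(2), either every Sylow subgroup of $G$ is cyclic or the Sylow $2$-subgroup is generalized quaternion. In the all-cyclic case I would invoke the H\"older--Burnside--Zassenhaus theorem: $G$ is metacyclic, $G=\langle A,B\mid A^m=B^n=1,\ BAB^{-1}=A^r\rangle$ with $((r-1)n,m)=1$ and $r^n\equiv1\pmod m$. It remains to derive the extra condition that $n/d$ is divisible by every prime divisor of $d$. For this I would apply the $pq$-condition of Theorem~\ref{thm22pqsylow}(1). If a prime $q$ divides $d$ but not $n/d$, then some element $B^{n/q}$ of order $q$ acts nontrivially on $\langle A\rangle$. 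Together with an element of prime order $p\mid m$ that it moves, it generates a non-cyclic group of order $pq$, a contradiction. This gives Type~\Romannum{1}.

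If the Sylow $2$-subgroup is quaternion and $G$ is solvable, I would look at the maximal normal subgroup of odd order, or use Fitting subgroup arguments. The quotient acting on it, together with the quaternion Sylow subgroup, should split according to whether $G$ has a normal $2$-complement. A normal $2$-complement leads to Type~\Romannum{2}, via a Type~\Romannum{1} subgroup of index $2$ extended by $R$. Otherwise, $Q_8$ is normalized by an element of order $3$, which gives Type~\Romannum{3} with $B$ permuting $P,Q,PQ$. An additional extension by $R$ of order $4$ gives Type~\Romannum{4} with Sylow $Q_{16}$. The congruences on $l,k,n$ then come from checking that all subgroups of order $pq$ stay cyclic.

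\textbf{Necessity: non-solvable groups.} I would use the Suzuki--Zassenhaus theorem on groups with periodic cohomology. A non-solvable group with these Sylow properties has a normal subgroup $\operatorname{SL}_2(\mathbb{F}_p)$. Restricting an FPF representation to $\operatorname{SL}_2(\mathbb{F}_p)$ and applying the $pq$-condition should force $p=5$. The centralizer quotient is then either trivial or of order $2$, which gives Types~\Romannum{5} and~\Romannum{6}. I expect this reduction to be the main obstacle. Ruling out all $\operatorname{SL}_2(\mathbb{F}_p)$ with $p>5$ requires exhibiting a non-cyclic subgroup of order $pq$, or an element whose eigenvalue $1$ is forced in every representation via Lemma~\ref{lem:21fpfpbf}. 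I would try to do this with the Borel subgroup, taking a $p$-element together with the diagonal torus.

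\textbf{Sufficiency.} For each type I would construct an FPF character explicitly.
\begin{itemize}
\item Type~\Romannum{1}: induce a faithful linear character of $\langle A\rangle\times\langle B^d\rangle$.
\item Types~\Romannum{2}--\Romannum{4}: induce, or tensor with the natural $2$-dimensional representation of the quaternion or binary polyhedral part.
\item Types~\Romannum{5}--\Romannum{6}: tensor an FPF representation of $G_1$ with the $2$-dimensional representation of $\operatorname{SL}_2(\mathbb{F}_5)\subset\operatorname{SU}(2)$, then induce to the index-$2$ overgroup.
\end{itemize}
In each case the FPF property is verified by Lemma~\ref{lem:21fpfpbf}: one checks that every element of prime order has no eigenvalue $1$. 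For the tensor products this uses the coprimality of $|G_1|$ with $30$.
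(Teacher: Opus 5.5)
The paper does not prove this theorem; it quotes it from Wolf's monograph (Chapter~6). Taken on its own, your sketch has a genuine gap exactly at the step you flag as the main obstacle. The $pq$-condition together with the Borel subgroup does dispose of every $p>5$ for which $p-1$ has an odd prime factor. It cannot exclude the Fermat primes $p=17,257,65537$.

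Take $G=\operatorname{SL}_2(\mathbb{F}_{17})$. Its structure is as follows.
\begin{itemize}
\item Its Sylow subgroups are $Q_{32}$, $C_9$ and $C_{17}$.
\item Its only involution is the central element $-I$.
\item Its odd element orders are $3,9,17$, and there is no element of order $51$, since a unipotent element has centralizer of order $34$.
\end{itemize}
Hence every subgroup of order $pq$ is cyclic, and $G$ satisfies both conclusions of Theorem~\ref{thm22pqsylow}. The diagonal torus is $C_{16}$, whose only element of prime order is $-I$. The Borel subgroup $C_{17}\rtimes C_{16}$ is itself of Type~\Romannum{1} ($m=17$, $n=16$, $d=8$, $n/d=2$), hence FPF, so restricting to it can never force an eigenvalue $1$. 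In fact every maximal subgroup of $G$ is FPF: these are the preimages $C_{17}\rtimes C_{16}$, $Q_{32}$, $Q_{36}$, $2O$ of $C_{17}\rtimes C_8$, $D_{16}$, $D_{18}$, $S_4$. So no argument by restriction to proper subgroups can rule $G$ out, and yet $G$ is not FPF.

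What is missing is global character-theoretic input. A direct sum is FPF if and only if each summand is, so it suffices to test irreducible characters with Lemma~\ref{lem:21fpfpbf}. The test at $-I$ forces $\chi(-I)=-\chi(1)$. The test at a unipotent $u$ then eliminates the principal series, since $\chi(1)+16\chi(u)=34$. What survives are the discrete series characters of degree $16$ and the two characters of degree $8$. Let $c$ be an element of order $3$; it lies in the non-split torus. At $c$ these characters give $\chi(1)+\chi(c)+\chi(c^2)=16+2\chi(c)\in\{12,18\}$, respectively $8-2=6$, never $0$.

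More generally, for a Fermat prime $p\ge 5$ the faithful characters of degree $\frac{p-1}{2}$ give $\frac{p-1}{2}-2$ at $c$, which vanishes only for $p=5$. A computation of this kind with the character table of $\operatorname{SL}_2(\mathbb{F}_p)$, or Wolf's own argument, has to replace your Borel-subgroup step.

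Less seriously, your solvable quaternion case is only an outline. The precise relations of Types~\Romannum{2}--\Romannum{4} (for instance $k\equiv -1\pmod{2^u}$ and $k\equiv -1\pmod 3$) still have to be derived. So do $S^2=-I$ and $(|G_1|,30)=1$ in Types~\Romannum{5}--\Romannum{6}, and that is where most of the work in Wolf's Chapter~6 lies.
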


\begin{remark}
In the above classification, $m$ is necessarily odd in all six types.
\end{remark}

\begin{example}[cf. {\cite[Theorem 5.5.6]{wolf2011spaces}}]\label{exam25type1}
Let $G$ be an FPF group of Type \Romannum{1} with parameters $m, n, r, d, n'$ satisfying $n = n'd.$ For integers $k,l$ with $(k,m) = 1 = (l,n),$ let $\alpha = e^{2\pi i k/m},\beta = e^{2\pi i l/n'}.$ Define 
\[\pi_{k,l}(A) = \begin{pmatrix}
\alpha & & & \\
& \alpha^r & & \\
& & \ddots & \\
& & & \alpha^{r^{d-1}}
\end{pmatrix}, \qquad
\pi_{k,l}(B) = \begin{pmatrix}
0 & 1 & & \\
& 0 & \ddots & \\
& & \ddots & 1 \\
\beta & & & 0
\end{pmatrix}.\]
Then each $\pi_{k,l}$ is an irreducible FPF representation of $G$ of degree $d.$
\end{example}

\section{Rational and integral FPF representations}\label{sec:2intfpfg}
The FPF notion extends naturally from complex to rational and integral representations. As integral representations are special among rational ones, we focus on the integral case.

\begin{definition}
Let $L$ be a lattice. A representation $\rho\colon G\to\operatorname{Aut}(L)$ is called integral FPF if for every non-identity $g\in G,$ $\det(\rho(g)-\mathrm{id})\ne 0.$
\end{definition}

\begin{lemma}\label{lem33detai}
Let $A \in \operatorname{GL}_n(\mathbb{Z})$ have order $m,$ and suppose that for every $1\leq i \leq m-1,$ $A^i$ has no eigenvalue equal to $1.$ Then the characteristic polynomial of $A$ is a power of the $m$-th cyclotomic polynomial:
\[\det(\lambda I_n-A) = \Phi_m(\lambda)^{\frac{n}{\varphi(m)}}.\]
\end{lemma}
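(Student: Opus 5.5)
Since $A$ has order $m$, it satisfies $A^m=I_n$, so $A$ is diagonalizable over $\mathbb{C}$ and each eigenvalue $\lambda$ of $A$ is an $m$-th root of unity. The aim is to show that every eigenvalue is a \emph{primitive} $m$-th root of unity, and then to use integrality of the characteristic polynomial to pin down the multiplicities.

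First, suppose some eigenvalue $\lambda$ has order $k<m$, so $k\mid m$. Take a corresponding eigenvector $v\neq 0$. Then $A^k v=\lambda^k v=v$, so $A^k$ has eigenvalue $1$ with $1\le k\le m-1$. This contradicts the hypothesis. Hence every eigenvalue of $A$ is a root of $\Phi_m$, and therefore
\[
\det(\lambda I_n-A)=\prod_{\zeta}(\lambda-\zeta)^{e_\zeta},
\]
where $\zeta$ runs over the primitive $m$-th roots of unity and $\sum_\zeta e_\zeta=n$.

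Second, I would show that all the multiplicities $e_\zeta$ are equal. The characteristic polynomial $f(\lambda)=\det(\lambda I_n-A)$ has integer coefficients. Since $\Phi_m$ is irreducible over $\mathbb{Q}$, the Galois group $\operatorname{Gal}(\mathbb{Q}(\zeta_m)/\mathbb{Q})$ acts transitively on the primitive $m$-th roots of unity. This action fixes $f$, so it preserves the multiplicity of each root, and hence $e_\zeta=e$ is constant. Equivalently, one can argue by induction: $\Phi_m$ divides $f$ in $\mathbb{Q}[\lambda]$, the quotient $f/\Phi_m$ again has all its roots among the primitive $m$-th roots of unity, and one repeats. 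Either way $f=\Phi_m^{e}$, and comparing degrees gives $e\,\varphi(m)=n$, so $e=n/\varphi(m)$.

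The only step needing care is the use of irreducibility of cyclotomic polynomials over $\mathbb{Q}$, which is a standard fact. Integrality also enters at this point: over $\mathbb{Z}$ (or $\mathbb{Q}$) the characteristic polynomial is rational, whereas for a general complex matrix the multiplicities could differ. The case $m=1$ is trivial, with $A=I$ and $\Phi_1(\lambda)^n=(\lambda-1)^n$, and the hypothesis is vacuous there.
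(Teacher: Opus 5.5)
Your proof is correct and follows the paper's argument essentially step by step: you rule out non-primitive eigenvalues by noting that an eigenvalue of order $k<m$ would make $1$ an eigenvalue of $A^k$, then use integrality of the characteristic polynomial and irreducibility of $\Phi_m$ over $\mathbb{Q}$ to force equal multiplicities, and read off the exponent $n/\varphi(m)$ by comparing degrees. Your Galois-transitivity justification of the equal-multiplicity step spells out what the paper states in one line, but the route is the same.
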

\begin{proof}
Since $A^m=I_n,$ all eigenvalues of $A$ are $m$-th roots of unity. If $\lambda$ is an eigenvalue of $A,$ then $\lambda^{i}$ is an eigenvalue of $A^i,$ so $\lambda^i\neq 1$ for all $1\leq i\leq m-1.$ This forces every eigenvalue of $A$ to be a primitive $m$-th root of unity, hence a root of $\Phi_m(x).$ Since $\det(\lambda I_n-A)$ is monic with integer coefficients and $\Phi_m(\lambda)$ is irreducible over $\mathbb{Q},$ every primitive $m$-th root of unity has the same multiplicity as a root of $\det(\lambda I_n-A).$ Comparing degrees gives the exponent $n/\varphi(m)$, completing the proof.
\end{proof}

\begin{lemma}[{Galois Correspondence for Rational Representations \cite[Thm. 70.15]{curtis1966rep}}] \label{lemgalcorr}
Let $K$ be a field of characteristic zero, and let $K[G]$ denote the group algebra of a finite group $G$ over $K.$ Write $\overline{K}$ for an algebraic closure of $K.$ 
\begin{enumerate}
\item Let $M$ be a simple left $K[G]$-module. Choose any irreducible $\overline{K}[G]$ constituent $\rho$ of $M\otimes_K\overline{K}.$ There exists an isomorphism of $\overline{K}[G]$-modules
\begin{equation*}
M\otimes_K\overline{K}\cong \bigoplus_{\sigma\in\operatorname{Gal}(K(\chi_\rho)/K)} s(\chi_\rho)\cdot \rho^\sigma,
\end{equation*}
where $K(\chi_\rho)$ is the character field of $\chi_\rho,$ $s(\chi_\rho)$ is the Schur index of $\rho$ over $K,$ and $\rho^\sigma$ stands for the Galois conjugate representation. Hence
\[\dim_{K}M=s(\rho)\cdot [K(\chi_\rho):K]\cdot \dim_{\overline{K}}\rho.\]

\item Given an irreducible $\overline{K}[G]$-module with character $\chi,$ its Galois orbit 
\[\{\chi^\sigma\mid \sigma\in\operatorname{Gal}(K(\chi)/K)\}\] 
uniquely determines a simple $K[G]$-module $M,$ whose base extension to $\overline{K}$ recovers the direct sum decomposition above. Distinct Galois orbits yield non-isomorphic simple $K[G]$-modules.
\end{enumerate}
\end{lemma}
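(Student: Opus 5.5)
Since this is the classical Galois correspondence \cite[Thm.~70.15]{curtis1966rep}, I would prove it from Wedderburn theory and the behaviour of $\operatorname{Hom}$ under extension of scalars. By Maschke's theorem $K[G]$ is semisimple, so $K[G]\cong\prod_i M_{n_i}(D_i)$ with division algebras $D_i$. A simple module $M$ corresponds to one factor $M_{n}(D)$, and $D^{\mathrm{op}}\cong\operatorname{End}_{K[G]}(M)$. Let $Z$ be the centre of $D$. The central idempotent $e_M$ cutting out this factor has coefficients in $K$.

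\emph{Step 1: constituents.} Over $\overline K$ the idempotent $e_M$ splits as a sum of central primitive idempotents $e_\chi$ of $\overline K[G]$. The explicit formula $e_\chi=\frac{\chi(1)}{|G|}\sum_g\chi(g^{-1})g$ shows that $\operatorname{Gal}(\overline K/K)$ acts on the $e_\chi$ through its action on characters. Since $e_M$ is Galois-fixed, the set of $\chi$ occurring in $M\otimes_K\overline K$ is a union of Galois orbits. It is a single orbit because $e_M$ is primitive among central idempotents defined over $K$: the sum of $e_\chi$ over one orbit is already defined over $K$.

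\emph{Step 2: equal multiplicities.} The character of $M\otimes_K\overline K$ is $K$-valued, hence Galois-invariant. Therefore all conjugates $\rho^\sigma$ occur with the same multiplicity $s$. The orbit has size $[K(\chi_\rho):K]$, since the stabiliser of $\chi_\rho$ is $\operatorname{Gal}(\overline K/K(\chi_\rho))$. Define $s=s(\chi_\rho)$ to be this multiplicity; it is the Schur index.

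To identify $s$ with the usual Schur index, compute
\[
\operatorname{End}_{K[G]}(M)\otimes_K\overline K\cong \operatorname{End}_{\overline K[G]}(M\otimes_K\overline K)\cong\prod_{\sigma}M_{s}(\overline K).
\]
Comparing centres gives $Z\cong K(\chi_\rho)$. Comparing dimensions gives $[D:Z]=s^2$. The dimension formula $\dim_K M=s\,[K(\chi_\rho):K]\,\chi_\rho(1)$ then follows by counting dimensions on both sides of the decomposition.

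\emph{Step 3: part 2.} Every irreducible $\rho$ occurs in $\overline K[G]=K[G]\otimes_K\overline K$, hence in some $M\otimes_K\overline K$ with $M$ simple. For injectivity, use
\[
\operatorname{Hom}_{K[G]}(M,N)\otimes_K\overline K\cong\operatorname{Hom}_{\overline K[G]}(M_{\overline K},N_{\overline K}).
\]
If $M$ and $N$ share a constituent, the right side is nonzero, so $\operatorname{Hom}_{K[G]}(M,N)\neq0$ and Schur's lemma gives $M\cong N$. Conversely, non-isomorphic simple modules have disjoint orbits.

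The main obstacle is Step 1, namely showing that exactly one Galois orbit occurs. I would handle it through the central-idempotent argument above rather than through characters alone.
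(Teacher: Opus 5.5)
Your proof is correct and is essentially the standard Wedderburn/central-idempotent argument behind Curtis--Reiner's Theorem~70.15; the paper does not reprove this lemma but only cites that theorem. One small overstatement: comparing the centres of $\operatorname{End}_{K[G]}(M)\otimes_K\overline K$ and $\prod_\sigma M_s(\overline K)$ only gives $[Z:K]=[K(\chi_\rho):K]$, not a field isomorphism $Z\cong K(\chi_\rho)$ (that would need, e.g., the central character of $\chi_\rho$ restricted to $Z(K[G])e_M$), but the degree equality is all you use to get $[D:Z]=s^2$ and the dimension formula.
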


\begin{proposition}
Every FPF group admits an integral FPF representation.
\end{proposition}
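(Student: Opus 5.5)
Start from an FPF complex representation $\rho$ of $G$, which exists by definition. Replace it by a rational one using Galois conjugation, and then pick a $G$-stable lattice inside that rational representation.

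\textbf{Step 1: reduce to an irreducible constituent.} If $\rho=\bigoplus_i\rho_i$, then $\det(\rho(g)-\mathrm{id})=\prod_i\det(\rho_i(g)-\mathrm{id})$. So every constituent $\rho_i$ is again FPF, and we may take $\rho$ irreducible with character $\chi$.

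\textbf{Step 2: Galois conjugates are FPF.} The FPF condition of Lemma~\ref{lem:21fpfpbf} reads $\sum_{j=0}^{p-1}\chi(g^j)=0$ for every $g$ of prime order $p$. This is an equation among values of $\chi$, which lie in $\mathbb{Q}(\zeta_{|G|})$. Applying $\sigma\in\operatorname{Gal}(\overline{\mathbb{Q}}/\mathbb{Q})$ preserves it, so $\chi^\sigma$ is FPF. Equivalently, the eigenvalues of $\rho^\sigma(g)$ are the $\sigma$-images of those of $\rho(g)$, and $\sigma$ fixes $1$.

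\textbf{Step 3: pass to a rational representation.} Apply Lemma~\ref{lemgalcorr} with $K=\mathbb{Q}$. This gives a simple $\mathbb{Q}[G]$-module $M$ containing $\rho$ as a constituent, with
\[
M\otimes_{\mathbb{Q}}\overline{\mathbb{Q}}\cong\bigoplus_{\sigma} s(\chi)\,\rho^\sigma .
\]
For each $g\neq e$, $\det(g-\mathrm{id})$ on $M\otimes\overline{\mathbb{Q}}$ is the product of $\det(\rho^\sigma(g)-\mathrm{id})^{s(\chi)}$ over $\sigma$. Each factor is nonzero by Step 2, so $M$ is a rational FPF representation. Extending scalars to $\mathbb{C}$ preserves the determinant, so no further check is needed there.

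\textbf{Step 4: choose a lattice.} Fix a $\mathbb{Q}$-basis of $M$ and let $L_0$ be its $\mathbb{Z}$-span. Set $L=\sum_{g\in G} gL_0$. This is a finite sum of lattices, so it is finitely generated, torsion-free and of full rank, and it is $G$-stable by construction. Hence $G\to\operatorname{Aut}(L)\cong\operatorname{GL}_n(\mathbb{Z})$. Since $\det(g-\mathrm{id})$ is unchanged by the inclusion $L\subset M$, the representation on $L$ is integral FPF.

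The only point that needs care is Step 3. Lemma~\ref{lemgalcorr} involves the Schur index, and we need that every constituent of $M\otimes\overline{\mathbb{Q}}$ is a Galois conjugate of $\rho$. That is exactly what the lemma states, and together with Step 2 it makes the rest routine.
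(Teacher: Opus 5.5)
Your proposal is correct and follows the paper's proof essentially step for step. You pass from an irreducible FPF complex representation to the simple $\mathbb{Q}[G]$-module attached to its Galois orbit via Lemma~\ref{lemgalcorr}, note that Galois conjugation fixes the eigenvalue $1$ (equivalently, preserves the criterion of Lemma~\ref{lem:21fpfpbf}), and then choose a $G$-stable full-rank lattice. The differences are small improvements: you justify the reduction to an irreducible constituent, which the paper takes for granted, and you build the lattice explicitly as $\sum_{g\in G} gL_0$ rather than citing Curtis--Reiner, Theorem 73.5.
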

\begin{proof}
Let $(\rho,V)$ be an irreducible FPF complex representation of $G$ of dimension $n$, with character $\chi$, and let $\mathbb{Q}(\chi)$ be the character field of $\rho.$ First, by Lemma \ref{lemgalcorr}, there exists an irreducible $\mathbb{Q}$-representation $\tau$ of $G$ such that
\[\tau \otimes_{\mathbb{Q}} \mathbb{C}
\cong \bigoplus_{\sigma \in \operatorname{Gal}(\mathbb{Q}(\chi)/\mathbb{Q})} s(\chi)\cdot\rho^{\sigma}\]
with dimension $m:=\dim_{\mathbb{Q}} \tau=s(\chi)\cdot [\mathbb{Q}(\chi):\mathbb{Q}] \cdot n,$ where $s(\chi)$ is the Schur index of $\chi$ over $\mathbb{Q}.$ For each $g \in G,$ the eigenvalues of $\tau(g)$ are exactly the Galois conjugates of the eigenvalues of $\rho(g),$ each repeated $s(\chi)$ times. Since the Galois automorphisms fix $1,$ none of these eigenvalues equal $1$ for $g\neq e,$ so $\tau$ is a rational FPF representation of $G.$

Next, Theorem 73.5 of Curtis-Reiner \cite{curtis1966rep} shows that every rational representation of a finite group is $\mathbb{Q}$-equivalent to an integral representation, a fact guaranteed by the existence of $G$-invariant full-rank lattices. Therefore $\tau$ is $\mathbb{Q}$-equivalent to an integral FPF representation of dimension $m.$
\end{proof}

To study the action of $\operatorname{Gal}(\overline{\mathbb{Q}}/\mathbb{Q})$ on the characters of $G$, we decompose $G$ as a semidirect product. Let $C_m$ denote the cyclic group of order $m.$ Every FPF group of Types~\Romannum{1}--\Romannum{5} given in Theorem~\ref{thm:FPF_classifi} admits the following structure: there exists a normal cyclic Hall subgroup $C_m=\langle A\rangle$ such that
\[G=C_m\rtimes_\alpha H,\qquad |G|=m|H|,\qquad (m,2|H|)=1,\]
where $H=G/C_m$ and $\alpha\colon H\to \operatorname{Aut}(C_m)=(\mathbb{Z}/m\mathbb{Z})^\times$ is the action. Concretely,
\[\begin{array}{ll}
\text{Type \Romannum{1}:} & H=\langle B\rangle\cong C_n,\quad \alpha(B)=r;\\[2mm]
\text{Type \Romannum{2}:} & H=\langle B,R\rangle,\quad \alpha(B)=r,\quad \alpha(R)=l;\\[2mm]
\text{Type \Romannum{3}:} & H=\langle B,P,Q\rangle,\quad \alpha(B)=r,\quad \alpha(P)=\alpha(Q)=1;\\[2mm]
\text{Type \Romannum{4}:} & H=\langle B,P,Q,R\rangle,\quad \alpha(B)=r,\quad \alpha(P)=\alpha(Q)=1,\quad \alpha(R)=l. \\[2mm]
\text{Type \Romannum{5}:} & H=\langle B\rangle\times \operatorname{SL}_2(\mathbb{F}_5), \quad \alpha(B)=r, \quad \alpha(\operatorname{SL}_2(\mathbb{F}_5))=1.
\end{array}\]

Recall that a finite group satisfies the $pq$-condition if every subgroup of order $pq$ (primes $p,q$, possibly equal) is cyclic. By Theorem \ref{thm22pqsylow}, every FPF group satisfies the $pq$-condition. 

\begin{lemma}\label{lem:3.1primact}
Let $K=\ker\alpha\leq H.$ Then every element of prime order in $H$ lies in $K$, i.e., it acts trivially on $C_m.$
\end{lemma}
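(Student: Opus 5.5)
Let $h\in H$ be an element of prime order $q$. We will lift $h$ to an element of $G$ of order $q$ and then apply the $pq$-condition of Theorem~\ref{thm22pqsylow}(1). Since $G=C_m\rtimes_\alpha H$, we identify $H$ with a complement of $C_m$ in $G$, so $h$ itself is an element of $G$ of order $q$. Because $(m,2|H|)=1$, the prime $q$ does not divide $m$. Hence $\langle h\rangle$ acts coprimely on the cyclic group $C_m$.

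The goal is to show that $\alpha(h)$ is trivial. Suppose not. Then $h$ acts nontrivially on $C_m$. We will find a prime $p\mid m$ such that $h$ acts nontrivially on the unique subgroup $C_p=\langle A^{m/p}\rangle$ of order $p$. This is the step I expect to be the main obstacle, because a coprime automorphism of a cyclic group could a priori act trivially on every subgroup of prime order while acting nontrivially on some subgroup of prime-power order.

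To handle it, write $C_m=\prod_{p\mid m} C_{p^{e_p}}$. Each factor is characteristic, so $h$ acts on it. Pick a prime $p$ such that $h$ acts nontrivially on $C_{p^{e}}$, where $e=e_p$. Now $\operatorname{Aut}(C_{p^e})\cong(\mathbb{Z}/p^e)^\times$ with $p$ odd. The kernel of the restriction map $(\mathbb{Z}/p^e)^\times\to(\mathbb{Z}/p)^\times=\operatorname{Aut}(C_p)$ is the subgroup of units $\equiv1\pmod p$, which has order $p^{e-1}$. The image of $h$ has order $q$ with $q\neq p$, so it cannot lie in this $p$-group kernel. Hence $h$ acts nontrivially on $C_p$. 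This is the standard fact that a $p'$-automorphism acting trivially on $\Omega_1$ of a cyclic $p$-group is trivial.

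Now let $a$ be a generator of this $C_p$ and set $S=\langle a\rangle\langle h\rangle=C_p\rtimes\langle h\rangle$. This is a subgroup of $G$ of order $pq$, since $C_p$ is normal in $G$, being characteristic in the normal subgroup $C_m$. Because $h$ acts nontrivially on $C_p$, the group $S$ is nonabelian, and in particular not cyclic. This contradicts the $pq$-condition of Theorem~\ref{thm22pqsylow}(1). Therefore $\alpha(h)=1$, that is, $h\in K=\ker\alpha$.
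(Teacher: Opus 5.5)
Your proof is correct and follows the same route as the paper. Both arguments assume the prime-order element acts nontrivially, find a prime $p\mid m$ on whose order-$p$ subgroup it still acts nontrivially, and obtain a non-abelian, hence non-cyclic, subgroup of order $pq$ contradicting the $pq$-condition. Your observation that the kernel of $(\mathbb{Z}/p^e\mathbb{Z})^\times\to(\mathbb{Z}/p\mathbb{Z})^\times$ is a $p$-group, so a nontrivial automorphism of prime order $q\neq p$ cannot act trivially on $C_p$, justifies a step the paper states without comment: that a nontrivial action on $C_m$ is already nontrivial on some subgroup of prime order.
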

\begin{proof}
Let $x\in H$ be an element of prime order $p$ and suppose $x\notin K$. Then $x$ acts nontrivially on $C_m.$ Hence there exists a prime $q\mid m$ such that $x$ acts nontrivially on the unique subgroup $C_q$ of $C_m,$ where necessarily $p\mid q-1$ and thus $p\neq q$. Then $\langle x, C_q\rangle\cong C_q\rtimes C_p$ is a non-abelian group of order $pq$, which violates the $pq$-condition. Therefore $x\in K$.
\end{proof}

By Mackey’s little-group theory \cite{Mackey1958} for the semidirect product $G$, every irreducible character of $G$ is of the form
\[\chi_{\lambda,\tau}=\operatorname{Ind}_{C_m\rtimes H_\lambda}^{G}\bigl(\lambda\boxtimes\tau\bigr),\]
where $\lambda\in\operatorname{Irr}(C_m)$, $H_\lambda$ is the stabilizer of $\lambda$ under the $H$-action on $\operatorname{Irr}(C_m)$,
 \[(h\cdot\lambda)(x)=\lambda(h^{-1}\cdot x),\]
  and $\tau\in\operatorname{Irr}(H_\lambda)$.
Two such induced characters coincide if and only if the corresponding pairs are $H$-conjugate.

\begin{lemma}\label{lem:26fpfsemp}
Let notation be as above. The irreducible FPF characters of $G$ are precisely
\[\chi_{\lambda,\tau}=\operatorname{Ind}_{C_m\rtimes K}^{G}\bigl(\lambda\boxtimes\tau\bigr),\]
where $\lambda$ runs over the faithful one-dimensional characters of $C_m$, and $\tau$ runs over the irreducible FPF characters of $K$. 
\end{lemma}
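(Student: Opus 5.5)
We use Lemma~\ref{lem:21fpfpbf}: an irreducible character $\chi_{\lambda,\tau}$ is FPF iff it has no nonzero vector fixed by any element of prime order. The criterion will be checked prime by prime, using Frobenius reciprocity and the Mackey decomposition of $\operatorname{Res}_{\langle g\rangle}\chi_{\lambda,\tau}$. Elements of prime order come in two kinds: those of order $q\mid m$, all lying in $C_m$, and those of order $p\nmid m$, which by Hall/Sylow theory are conjugate into a complement $H$. By Lemma~\ref{lem:3.1primact} the latter actually lie in $C_m\rtimes K$ after conjugation, and they centralize $C_m$.

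\textbf{Step 1: first reduction on $\lambda$.} Suppose $\lambda$ is not faithful. Then some prime $q\mid m$ has $C_q\subseteq\ker\lambda$. Because $C_q$ is characteristic in the normal subgroup $C_m$, it is contained in $\ker(h\cdot\lambda)$ for every $h\in H$. Consequently $\operatorname{Res}_{C_m}\chi_{\lambda,\tau}$, which is a sum of $H$-conjugates of $\lambda$, is trivial on $C_q$, so $\chi_{\lambda,\tau}$ is not FPF. Conversely, if $\lambda$ is faithful, every conjugate $h\cdot\lambda$ is faithful. Hence $\operatorname{Res}_{C_q}\chi_{\lambda,\tau}$ has no trivial constituent, and the condition of Lemma~\ref{lem:21fpfpbf} holds at all primes $q\mid m$.

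\textbf{Step 2: faithful $\lambda$ has stabilizer $K$.} $H$ acts on $\operatorname{Irr}(C_m)\cong\mathbb{Z}/m$ through $\alpha$, by multiplication by units. For faithful $\lambda$ the stabilizer is therefore exactly $\ker\alpha=K$, so $H_\lambda=K$. This gives the stated form $\operatorname{Ind}_{C_m\rtimes K}^G(\lambda\boxtimes\tau)$ with $\tau\in\operatorname{Irr}(K)$.

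\textbf{Step 3: primes $p\nmid m$.} Let $x$ have prime order $p\nmid m$. After conjugating, we may take $x\in K$ by Lemma~\ref{lem:3.1primact}; this uses that $C_m\rtimes K$ is normal in $G$, so the conjugates of $x$ stay in it. Apply Mackey's formula to $\operatorname{Res}_{\langle x\rangle}\operatorname{Ind}_{C_m\rtimes K}^G$, with double cosets represented by $h\in H$. Each summand is $\operatorname{Res}_{\langle x\rangle}$ of the conjugate $(\lambda\boxtimes\tau)^h$ on $C_m\rtimes K$. Since $x$ lies in $K$ and acts trivially on $C_m$, this restriction equals $\tau^h$ restricted to $\langle x\rangle$.
\begin{itemize}
\item If $\tau$ is FPF on $K$, then every conjugate $\tau^h$ is FPF. (Note that $K\trianglelefteq H$ as a kernel, so conjugation by $h$ preserves $K$ and its elements of order $p$.) Hence no trivial constituent appears, and $\chi_{\lambda,\tau}$ is FPF.
\item Conversely, if $\tau$ has a fixed vector under some $x\in K$ of prime order, the $h=e$ summand already contributes a fixed vector. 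So $\chi_{\lambda,\tau}$ is not FPF.
\end{itemize}

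\textbf{Step 4: conclusion.} Steps 1--3 together give the stated characterization. Distinct $H$-orbits of pairs give distinct characters by Mackey's theory, though this is only relevant for listing and not for the statement itself.

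\textbf{Main obstacle.} The delicate point is Step 3: justifying that every element of prime order $p\nmid m$ is $G$-conjugate into $K$, and handling the double-coset bookkeeping. Since $(m,|H|)=1$, Schur--Zassenhaus says all complements of $C_m$ are conjugate. Then an element $x$ of order $p$ generates a $p$-subgroup, which lies in some complement and hence is conjugate into $H$. Lemma~\ref{lem:3.1primact} then places it in $K$. I would also double-check that an FPF $\tau$ exists only when $K$ is FPF, which holds automatically since $K$ is a subgroup of $G$.
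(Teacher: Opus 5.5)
Your proof is correct and follows essentially the same route as the paper. Both check the criterion of Lemma~\ref{lem:21fpfpbf} via the Mackey restriction of $\operatorname{Ind}_{C_m\rtimes K}^{G}(\lambda\boxtimes\tau)$ to $\langle g\rangle$, split by primes:
\begin{itemize}
\item for primes dividing $m$, the element $g$ lies in $C_m$ and faithfulness of every $H$-conjugate of $\lambda$ is used;
\item for primes not dividing $m$, $g$ is conjugated into $H$, hence into $K$ by Lemma~\ref{lem:3.1primact}, and $K\trianglelefteq H$ is used.
\end{itemize}
The paper only asserts the forward direction in one line, whereas your Steps~1--3 actually prove it. You show that a non-faithful $\lambda$ puts some $C_q$ in every conjugate kernel, that the stabilizer of a faithful $\lambda$ is exactly $K$, and that a non-FPF $\tau$ is detected by the identity-coset summand.
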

\begin{proof}
If $\chi_{\lambda,\tau}$ is FPF, then $\lambda$ is faithful and $\tau$ is FPF, and hence $H_\lambda = K$. It remains to verify the converse direction.

Suppose $\lambda$ is faithful and $\tau$ is FPF. We apply Lemma~\ref{lem:21fpfpbf} to show that $\chi=\chi_{\lambda,\tau}$ is FPF. Write $A=C_m\rtimes K.$ For any element $g\in G$ of prime order $p$, Mackey's formula gives
\[\langle \chi|_{\langle g\rangle},1 \rangle=\frac{1}{p}\sum_{k=0}^{p-1}\chi(g^k)=\frac{1}{p}\sum_{k=0}^{p-1}\sum_{\substack{s\in H/K \\ s^{-1}g^ks\in A}}(\lambda\boxtimes\tau)(s^{-1}g^ks).\]
Since $(m,|H|)=1$, we have two cases: if $p\mid m$ then $g\in C_m$; otherwise, $g$ is conjugate to an element of $H$. 
\begin{itemize}
\item If $g\in C_m,$ then $\langle g\rangle\leq C_m\triangleleft A.$ Therefore
\[\langle \chi|_{\langle g\rangle},1 \rangle=\frac{1}{p}\sum_{s\in H/K}\sum_{k=0}^{p-1}(s\cdot\lambda)(g^k)\cdot \tau(1).\]
As $\lambda$ is faithful, every $s\cdot\lambda$ is also faithful. By Lemma~\ref{lem:21fpfpbf},
$\sum_{k=0}^{p-1}(s\cdot\lambda)(g^k)=0,$
which implies $\langle \chi|_{\langle g\rangle},1 \rangle=0.$

\item If $g\in x^{-1}Hx$ for some $x\in G,$ we may assume $g\in H$. Lemma~\ref{lem:3.1primact} yields $g\in K$. Since $K\triangleleft  H,$ we obtain
\[\langle \chi|_{\langle g\rangle},1 \rangle=\frac{1}{p}\sum_{k=0}^{p-1}\sum_{s\in H/K}\tau(s^{-1}g^ks).\]
Because $\tau$ is FPF, we have $\sum_{k=0}^{p-1}\tau(s^{-1}g^ks)=0,$ hence $\langle \chi|_{\langle g\rangle},1 \rangle=0.$ This completes the proof.
\end{itemize}
\end{proof}

\begin{theorem}\label{thm:fpfweiyi}
Let $G$ be an FPF group. Then the irreducible FPF characters of $G$ form a single orbit under the action of the Galois group. Equivalently, $G$ admits a unique irreducible rational FPF representation up to isomorphism.
\end{theorem}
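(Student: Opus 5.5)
The plan is to reduce, via Lemma~\ref{lem:26fpfsemp}, to the kernel $K=\ker\alpha$, and then to check a short list of building blocks directly. The two formulations in the statement are equivalent by Lemma~\ref{lemgalcorr}(2): Galois orbits of irreducible complex characters correspond bijectively to simple $\mathbb{Q}[G]$-modules. By the proof of the Proposition above, the simple $\mathbb{Q}[G]$-module attached to an FPF character is FPF. Conversely, every complex constituent of a rational FPF representation is FPF. So it suffices to show that the irreducible FPF characters of $G$ form a single Galois orbit.

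\textbf{Types \Romannum{1}--\Romannum{5}.} Write $G=C_m\rtimes H$ with $(m,|H|)=1$. By Lemma~\ref{lem:26fpfsemp}, the irreducible FPF characters are $\operatorname{Ind}_{C_m\rtimes K}^G(\lambda\boxtimes\tau)$, with $\lambda$ a faithful linear character of $C_m$ and $\tau$ an irreducible FPF character of $K$. Set $N=|K|$. Since $(m,N)=1$, we have $\operatorname{Gal}(\mathbb{Q}(\zeta_{mN})/\mathbb{Q})\cong(\mathbb{Z}/m)^\times\times(\mathbb{Z}/N)^\times$. Hence a Galois automorphism can move $\lambda$ to any other faithful character of $C_m$ (all of which are conjugate under $(\mathbb{Z}/m)^\times$) while acting on $\tau$ by an arbitrary prescribed element of $\operatorname{Gal}(\mathbb{Q}(\zeta_N)/\mathbb{Q})$. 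Galois action commutes with induction. So the claim for $G$ reduces to the same claim for $K$: the irreducible FPF characters of $K$ form one Galois orbit.

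Next I would identify $K$ type by type from the presentations. In Type \Romannum{1}, $K=\langle B^d\rangle$ is cyclic. Otherwise $K$ is built from a cyclic group of odd order together with one of $Q_{2^a}$, $Q_8\rtimes C_{3^j}$, the binary octahedral-type extension in Type \Romannum{4}, or $\operatorname{SL}_2(\mathbb{F}_5)$. For these building blocks I would verify the claim directly:
\begin{itemize}[label=$\circ$]
\item for a cyclic group, the FPF irreducibles are the faithful linear characters, which form one Galois orbit;
\item for $Q_{2^a}$, they are the $2$-dimensional characters with values $\zeta^j+\zeta^{-j}$ on $a$, $j$ odd, which are Galois conjugate;
\item for $\operatorname{SL}_2(\mathbb{F}_5)$, they are the two faithful $2$-dimensional characters, swapped by $\sqrt5\mapsto-\sqrt5$.
\end{itemize}
For a direct product $K_1\times K_2$ of coprime order, I would show that the FPF irreducibles are exactly $\tau_1\boxtimes\tau_2$ with both factors FPF. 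The reason is that an element of prime order lies in one factor, so Lemma~\ref{lem:21fpfpbf} splits. The coprime-order Galois independence argument then gives a single orbit. Where $K$ is not a direct product (a central or index-$2$ extension), I would apply Lemma~\ref{lem:26fpfsemp} again or use the Clifford argument below.

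\textbf{Type \Romannum{6}, and the non-split pieces.} Here $G_2\triangleleft G$ has index $2$, and $G_2$ is of Type \Romannum{5}, so the claim holds for $G_2$. An irreducible FPF character $\chi$ of $G$ restricts to an FPF character of $G_2$, so its constituents lie in the single Galois orbit $\mathcal{O}$ of $G_2$. By Clifford theory, $\chi$ is either an extension of some $\psi\in\mathcal{O}$ or is induced from one. I would first show that the $S$-action on $\mathcal{O}$ is free, since $\theta$ swaps the two faithful characters of $\operatorname{SL}_2(\mathbb{F}_5)$. This forces $\chi=\operatorname{Ind}_{G_2}^G\psi$. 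Conversely, each such induced character is FPF, because elements of prime order lie in $G_2$ (the unique involution is $-I\in G_2$). Then Galois conjugacy on $\mathcal{O}$ passes to the induced characters.

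\textbf{Main obstacle.} I expect the hardest step to be Types \Romannum{2} and \Romannum{4}. There $K$ mixes the $2$-part with the odd cyclic part $\langle B^d\rangle$ through the nontrivial relations $RBR^{-1}=B^k$ and $RPR^{-1}=QP$. The clean coprime-product argument therefore fails, and one has to run the Clifford/index-$2$ analysis carefully. In particular, one must check that the relevant stabilizers are as claimed, so that extensions do not produce extra non-conjugate FPF characters.
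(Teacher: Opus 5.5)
Your reduction for Types I--V is the paper's step (a). Your Type VI argument is correct and tidier than the paper's: Clifford theory over $G_2$, freeness of the $S$-action on the FPF characters of $G_2$ (because $\theta$ swaps the two spin characters of $\operatorname{SL}_2(\mathbb{F}_5)$), and the fact that every element of prime order lies in $G_2$ work for an arbitrary Type I factor $G_1$, whereas the paper writes $G_1=C_n$.

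The gap is that the proposal stops exactly where Types II--IV need work. You name $Q_8\rtimes C_{3^j}$ (including $\operatorname{SL}_2(\mathbb{F}_3)$) and the binary-octahedral-type groups as building blocks, but you verify only cyclic groups, $Q_{2^a}$ and $\operatorname{SL}_2(\mathbb{F}_5)$. For Types II and IV you leave the case where $K$ is not a coprime direct product as an open ``main obstacle''. These checks are not formalities. For instance, $\operatorname{SL}_2(\mathbb{F}_3)$ has three faithful $2$-dimensional characters, and only the rational one satisfies \eqref{lem21fpfs} on elements of order $3$. So a single Galois orbit there requires a genuine computation. As written, Types II--IV are not proved.

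The paper closes this by running step (a) as an induction on $|G|$ rather than trying to identify $K$ once and for all. Since $K$ is an FPF group of order $|G|/m<|G|$, the inductive hypothesis applies to it whatever its shape. Any group that still has a nontrivial normal cyclic Hall subgroup of odd order is decomposed again by Lemma~\ref{lem:26fpfsemp}; an example is $\langle B^{2^u}\rangle$ in $\langle B,R\rangle$, where $R$ may invert part of it. So Type II needs no Clifford analysis at all.

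What remains are the groups with no such subgroup: cyclic $2$-groups, $Q_{2^a}$, $Q_8\rtimes C_{3^s}$, the Type IV groups with $n=3^s$, and $\operatorname{SL}_2(\mathbb{F}_5)$. The paper settles these with Lemma~\ref{lem:21fpfpbf} and the character tables of $\operatorname{SL}_2(\mathbb{F}_3)$ and $2O$, and then passes to the $3$-power extensions. The tables give one rational FPF character for $\operatorname{SL}_2(\mathbb{F}_3)$, and two for $2O$ exchanged by $\sqrt2\mapsto-\sqrt2$; the faithful $4$-dimensional character of $2O$ fails on elements of order $3$.

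For $Q_8\rtimes C_{3^s}$ the subgroup $\langle B^3\rangle$ is central. For the Type IV groups with $n=3^s$, $s\ge2$, however, the conditions force $k\equiv-1\pmod{3^s}$, so $R$ inverts $B$ and the extension of $2O$ is not central. Here your own index-$2$ template is exactly the right tool. Apply it to $N=Q_8\rtimes C_{3^s}$: $R$ acts freely on the FPF characters of $N$ because it inverts the central element of order $3$ of $N$, and every element of prime order of $G$ lies in $N$.
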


Before proving Theorem~\ref{thm:fpfweiyi}, we treat two basic examples. Let $\varphi$ be Euler’s totient function, and set $\zeta_m=e^{2\pi i/m}.$

\begin{example}\label{thmcycphi31}
The simplest example is the cyclic group $C_m=\langle g\rangle.$ The complex irreducible representations of $C_m$ are given by $\rho_k(g) = \zeta_m^k$ for $1\le k \le m.$ Each $\rho_k$ has Schur index $1$ over $\mathbb{Q}.$ Then $\rho_k$ is FPF if and only if $(k,m) = 1.$ The $\varphi(m)$ complex irreducible FPF representations are mutually Galois conjugate, hence they assemble into the unique irreducible rational FPF representation $\mathbb{Q}(\zeta_m),$ of dimension $\varphi(m).$ Consequently, the dimension of any rational FPF representation of $C_m$ is divisible by $\varphi(m).$ 
\end{example}

\begin{example}\label{exa:q4mfpf}
Consider another example $Q_{4m}$. By Example~\ref{exam25type1}, every irreducible FPF representation of $Q_{4m}$ is given by
\[\rho_k(a)=
\begin{pmatrix}
\zeta_{2m}^k & 0\\  0 & \zeta_{2m}^{-k} \end{pmatrix},
\qquad  \rho_k(b)=\begin{pmatrix} 0 & 1\\ (-1)^k & 0 \end{pmatrix},\]
where $(k,2m)=1.$ These representations are mutually Galois conjugate.
\end{example}

\begin{proof}[Proof of Theorem \ref{thm:fpfweiyi}]
We first treat Types~\Romannum{1}--\Romannum{5} by induction on $G$; Type~\Romannum{6} is handled at the end.

\paragraph{(a) The induction step.}
Let $G$ be of Types \Romannum{1}--\Romannum{5}, with decomposition $G=C_m\rtimes_\alpha H$ and $K=\ker\alpha$. Suppose the irreducible FPF characters of $K$ form a single Galois orbit, we show the same holds for $G$.

Let $\lambda$ be a faithful character of $C_m$, and $\tau$ an irreducible FPF character of $K$. The values of $\lambda$ lie in $\mathbb{Q}(\zeta_{m})$, and the values of $\tau$ lie in $\mathbb{Q}(\zeta_{|K|})$. Since $(m,|K|)=1$, the two cyclotomic fields are linearly disjoint:
\[\mathbb{Q}(\zeta_{m})\cap\mathbb{Q}(\zeta_{|K|})=\mathbb{Q},\qquad
\operatorname{Gal}\big(\mathbb{Q}(\zeta_{m},\zeta_{|K|})/\mathbb{Q}\big)\cong(\mathbb{Z}/m\mathbb{Z})^{\times}\times(\mathbb{Z}/|K|\mathbb{Z})^{\times}.\]
By Lemma~\ref{lem:26fpfsemp}, the Galois orbit of $\lambda\boxtimes\tau$ is the product of the Galois orbits of $\lambda$ and $\tau$. Combined with Example~\ref{thmcycphi31}, induction shows that all irreducible FPF characters of $G$ lie in a single Galois orbit.

\paragraph{(b) The case $m=1$.}
If $m>1$, then $K\le H=G/C_m$ has order $|G|/m<|G|$, and $K$ is FPF. By induction, the FPF characters of $K$ form a single orbit, so by (a) the same holds for $G$. It remains to handle $m=1$. Now consider the Types \Romannum{1}--\Romannum{5} groups with no nontrivial normal cyclic Hall subgroup of odd order, which are exactly:
\begin{enumerate}[label=(\roman*)]
\item  cyclic $2$-groups $C_{2^a}$ (Type~\Romannum{1});
\item  generalized quaternion groups $Q_{2^a}$, $a\ge 3$ (Type~\Romannum{2});
\item  the binary tetrahedral group $\operatorname{SL}_2(\mathbb{F}_3)$ and its central extensions by cyclic $3$-groups (Type~\Romannum{3} with $n=3^s$);
\item  the binary octahedral group $2O$ and its central extensions by cyclic $3$-groups (Type~\Romannum{4} with $n=3^s$);
\item  the binary icosahedral group $\operatorname{SL}_2(\mathbb{F}_5)$ (Type~\Romannum{5} with $n=1$).
\end{enumerate}
Cases (\romannumeral1) and (\romannumeral2) are Example~\ref{thmcycphi31} and \ref{exa:q4mfpf}.
For $\operatorname{SL}_2(\mathbb{F}_3)$, $2O$, $\operatorname{SL}_2(\mathbb{F}_5)$ (finite subgroups of $\operatorname{SU}(2)$), we apply Lemma~\ref{lem:21fpfpbf} together with the character tables of these groups to identify all irreducible FPF representations, which turn out to be exactly the $2$-dimensional spin representations with $-1\mapsto -I_2$:

\begin{center}\begin{tabular}{lcc}
\toprule
Group &  Irreducible FPF representations & Character field \\
\midrule
$\operatorname{SL}_2(\mathbb{F}_3)$   & one $2$-dim & $\mathbb{Q}$ \\
$2O$    & two $2$-dim & $\mathbb{Q}(\sqrt{2})$ \\
$\operatorname{SL}_2(\mathbb{F}_5)$    & two $2$-dim & $\mathbb{Q}(\sqrt{5})$ \\
\bottomrule
\end{tabular}\end{center}
In the last two cases the two characters are exchanged by $\sqrt2\mapsto-\sqrt2$ and $\sqrt5\mapsto-\sqrt5$. Hence each of these three groups has a single Galois orbit of FPF characters.

A central extension by $C_{3^{s-1}}$ is obtained by tensoring a spin character with a faithful character of the central $C_{3^{s-1}}$. This replaces the character field by $\mathbb{Q}(\zeta_{3^{s-1}})$ in case (\romannumeral3) and by $\mathbb{Q}(\sqrt{2},\zeta_{3^{s-1}})$ in case (\romannumeral4); Since the cyclotomic factor is linearly disjoint from $\mathbb{Q}(\sqrt{2})$ and the Galois group acts transitively on it, the orbit is still unique.

\paragraph{(c) Type~\Romannum{6}.} Let $G=\langle G_1,S\rangle$ with $G_1=C_n\times\operatorname{SL}_2(\mathbb{F}_5)$, $[G:G_1]=2$, $S^2=-I$, and $S$ inducing the outer automorphism $\theta$ of $\operatorname{SL}_2(\mathbb{F}_5)$, $\theta$ swaps the two spin characters $\chi,\chi'$. The irreducible FPF characters of $G_1$ are $\lambda\boxtimes\chi$ and $\lambda\boxtimes\chi'$, $\lambda$ faithful on $C_n$. Since $S$ exchanges $\chi$ and $\chi'$, the irreducible FPF characters of $G$ are $\operatorname{Ind}_{G_1}^{G}\bigl(\lambda\boxtimes\chi\bigr)$ with $\lambda$ faithful on $C_n$ (where $\lambda$ and $\lambda\circ s$, $s$ the automorphism of $C_n$ induced by $S$, give the same
character). The induced character contains both $\chi$ and $\chi'$, hence is fixed by $\sqrt5\mapsto-\sqrt5$. The Galois action therefore reduces to the transitive action on $\lambda$, and the FPF characters of $G$ form a single orbit. For $n=1$, $G$ has order $240$ and a single irreducible FPF character of degree $4$.

This proves the first assertion. The final equivalence follows from Lemma~\ref{lemgalcorr}: distinct Galois orbits of irreducible complex characters give rise to non-isomorphic simple rational modules, so a single orbit corresponds to a unique irreducible rational FPF representation.
\end{proof}

We already know that the irreducible rational FPF representation is unique. However, going from rational representations to integral ones is more complicated. According to the Jordan-Zassenhaus finiteness theorem \cite{curtis1966rep, Reiner1970as}, each irreducible rational representation corresponds to only finite integral representations, yet this correspondence is far from bijective. Concretely, for an irreducible $\mathbb{Q}[G]$-module $V,$ the set of integral forms
\[\mathcal{O}(V) = \{\mathbb{Z}[G]\text{-lattice } L \mid L\otimes_{\mathbb{Z}}\mathbb{Q} \cong V\}\]
contains finitely many $\mathbb{Z}$-equivalence classes. The size of this set depends on deep arithmetic invariants. As a concrete example, when $G=C_m$ is cyclic and $V=\mathbb{Q}(\zeta_m),$ Taussky-Todd \cite{TausskyTodd40m} show that the $\mathbb{Z}$-equivalence classes $\mathcal{O}(\mathbb{Q}(\zeta_m))$ of irreducible integral FPF representations of $C_m$ are in bijection with the ideal class group of the cyclotomic ring $\mathbb{Z}[\zeta_m]$; see also Reiner \cite{Reiner1970as}.

\begin{proposition}\label{corozhcdimn}
Let $V$ be a rational FPF representation of $G.$ Then for every Sylow $p$-subgroup $P$ of $G,$ $\varphi(|P|)$ divides $\dim_{\mathbb{Q}}V.$
\end{proposition}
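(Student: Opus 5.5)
The plan is to restrict $V$ to a Sylow subgroup and exploit that an FPF restriction is again FPF. Let $P$ be a Sylow $p$-subgroup of $G$. Then $V|_P$ is a rational FPF representation of $P$, since the condition $\det(g-\mathrm{id})\neq 0$ for $g\neq e$ is inherited by subgroups. By Theorem~\ref{thm22pqsylow}(2), $P$ is either cyclic or generalized quaternion $Q_{2^a}$ with $a\ge 3$. So it suffices to show that every rational FPF representation of a cyclic $p$-group $C_{p^a}$, and of $Q_{2^a}$, has dimension divisible by $\varphi(|P|)$.

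In the cyclic case, pick a generator $g$ of $P$ of order $|P|=p^a$. By hypothesis no nontrivial power $g^i$ has eigenvalue $1$. Choose a $G$-invariant lattice (or simply work over $\mathbb{Q}$, where the same argument applies to the characteristic polynomial). Lemma~\ref{lem33detai} then says the characteristic polynomial of $g$ on $V$ is $\Phi_{p^a}(\lambda)^{\dim V/\varphi(p^a)}$, and in particular $\varphi(p^a)\mid\dim_{\mathbb{Q}}V$. The divisibility really comes from the rationality of the characteristic polynomial together with the irreducibility of $\Phi_{p^a}$ over $\mathbb{Q}$. Alternatively, decompose $V|_P$ into irreducible rational representations: by Example~\ref{thmcycphi31}, the only irreducible rational FPF representation of $C_{p^a}$ is $\mathbb{Q}(\zeta_{p^a})$, of dimension $\varphi(p^a)$.

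For $P\cong Q_{2^a}$ of order $2^a$, we need $\varphi(2^a)=2^{a-1}\mid\dim V$. The element $a$ (in the presentation) has order $2^{a-1}$, and Lemma~\ref{lem33detai} applied to it only yields divisibility by $2^{a-2}$. This factor of $2$ is the main obstacle. Rather than counting eigenvalues, I would use Theorem~\ref{thm:fpfweiyi} for $Q_{2^a}$ together with complete reducibility over $\mathbb{Q}$ (Maschke). Every irreducible rational constituent of $V|_P$ is FPF, since it is a summand of an FPF module, so it must be the unique irreducible rational FPF representation $W$ of $Q_{2^a}$. It therefore suffices to compute $\dim_{\mathbb{Q}}W$ via Lemma~\ref{lemgalcorr}: $\dim W=s\cdot[\mathbb{Q}(\chi):\mathbb{Q}]\cdot 2$ with $\chi=\rho_k$ from Example~\ref{exa:q4mfpf}, $m=2^{a-2}$. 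The character field is $\mathbb{Q}(\zeta_{2^{a-1}}+\zeta_{2^{a-1}}^{-1})$, of degree $2^{a-3}$ for $a\ge 3$. Hence $2[\mathbb{Q}(\chi):\mathbb{Q}]=2^{a-2}$, and we need the Schur index $s=2$. This holds because $\chi$ is real-valued (totally real character field) while $Q_{2^a}$ has no faithful real $2$-dimensional representation; equivalently, the Frobenius–Schur indicator of $\chi$ is $-1$, so $s_{\mathbb{R}}=2$ and hence $s_{\mathbb{Q}}=2$.

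A more elementary check avoids the Schur index. The subgroup $\langle a\rangle$ contributes divisibility by $\varphi(2^{a-1})=2^{a-2}$, and the extra factor of $2$ comes from the fact that $V\otimes\mathbb{R}$ must carry a quaternionic structure: the elements $a$ and $b$ generate a copy of the real quaternion-type algebra acting on $V_{\mathbb{R}}$, and $b^2=-1$ acts by a complex structure anticommuting appropriately. Either way, $\dim W=2^{a-1}$, so $2^{a-1}\mid\dim V$. The case $a=3$, $Q_8$, gives $\dim W=4$, matching $\varphi(8)=4$, which serves as a sanity check.
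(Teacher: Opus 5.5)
Your proof is correct and has the same skeleton as the paper's. You restrict to a Sylow subgroup $P$ and handle cyclic $P$ via the cyclotomic characteristic polynomial; the paper simply cites Example~\ref{thmcycphi31}, which is your alternative. For $P\cong Q_{2^a}$ you show that the irreducible rational FPF representation of $Q_{2^a}$ has dimension $2^{a-1}$.

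Where you differ is in how that dimension is obtained. The paper only notes that $\bigoplus_{(k,2^{a-1})=1}\rho_k$ has dimension $2^{a-1}$. But $\rho_k\cong\rho_{-k}$, so this sum contains each of the $2^{a-3}$ irreducible FPF characters twice. Galois-stability of a rational character by itself only gives $2^{a-2}\mid n$. The paper's tacit doubling is exactly the Schur index. Your computation through Lemma~\ref{lemgalcorr} uses $[\mathbb{Q}(\chi):\mathbb{Q}]=2^{a-3}$ and $s_{\mathbb{Q}}(\chi)=2$, the latter from the Frobenius--Schur indicator together with $s_{\mathbb{R}}(\chi)\mid s_{\mathbb{Q}}(\chi)$. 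This supplies the justification the paper omits, at the price of invoking Schur-index theory.

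Your ``more elementary check'' does not work as phrased. Divisibility by $2^{a-2}$ from $\langle a\rangle$ and by $4$ from a quaternionic structure on $V\otimes\mathbb{R}$ combine only to their least common multiple, which is $2^{a-2}$ once $a\ge 4$. Working over $\mathbb{R}$ loses the rational information you need. The correct elementary version works over $\mathbb{Q}$: $\mathbb{Q}[a]\cong\mathbb{Q}(\zeta_{2^{a-1}})$ and $b$, with $b^2=-1$ and $bxb^{-1}=\bar{x}$, generate a quaternion division algebra over $\mathbb{Q}(\zeta_{2^{a-1}}+\zeta_{2^{a-1}}^{-1})$. This algebra has $\mathbb{Q}$-dimension $2^{a-1}$, and $V$ is a vector space over it. This avoids Schur indices entirely.
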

\begin{proof}
Write $n=\dim_{\mathbb{Q}}V$ and $\rho \colon G\to \operatorname{GL}(V).$ By Theorem \ref{thm22pqsylow}, every odd Sylow $p$-subgroup $P\leq G$ is cyclic. The restriction $\rho|_{P}$ is a rational FPF representation of $P,$ so Example \ref{thmcycphi31} yields $\varphi(|P|)\mid n.$ If the Sylow $2$-subgroup $P$ is cyclic, the same argument gives $\varphi(|P|)\mid n.$ If the Sylow $2$-subgroup is $Q_{2^{a}}$ with $a\ge 3,$ every irreducible complex FPF representation of $Q_{2^{a}}$ has dimension $2$ and is of the form $\rho_k$ with $(k,2^{a-1})=1.$ Their direct sum $\oplus_{(k,2^{a-1})=1}\rho_k$ has dimension $2^{a-1},$ which forces $2^{a-1}\mid n.$
\end{proof}

\begin{proposition}\label{coro28gsln}
Let $\rho\colon G \to\operatorname{Aut}(L)$ be an integral FPF representation. If $|G|>2,$ then $\rho(G) <\operatorname{SL}(L).$
\end{proposition}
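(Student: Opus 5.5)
We must show that $\det\rho(g)=1$ for every $g\in G$ whenever $|G|>2$. The plan is to compute the determinant of each $\rho(g)$ through its characteristic polynomial, using Lemma~\ref{lem33detai}.

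Fix $g\in G$ of order $k$. The restriction of $\rho$ to $\langle g\rangle$ is still integral FPF, so every power $\rho(g)^i$ with $1\le i\le k-1$ has no eigenvalue $1$. Lemma~\ref{lem33detai} then gives
\[\det(\lambda I_n-\rho(g))=\Phi_k(\lambda)^{n/\varphi(k)}, \qquad n=\operatorname{rank}L.\]
Consequently $\det\rho(g)$ is $\bigl((-1)^{\varphi(k)}\Phi_k(0)\bigr)^{n/\varphi(k)}$.

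We now evaluate $\Phi_k(0)$ case by case.
\begin{itemize}
\item For $k\ge 2$ we have $\Phi_k(0)=1$. This follows by induction from $\prod_{d\mid k}\Phi_d(0)=0^k-1=-1$ together with $\Phi_1(0)=-1$.
\item For $k\ge 3$, $\varphi(k)$ is even, so $\det\rho(g)=1$.
\item For $k=2$, $\Phi_2=\lambda+1$, so $\rho(g)=-I_n$ and $\det\rho(g)=(-1)^n$.
\item For $k=1$ the claim is trivial.
\end{itemize}

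The main obstacle is the involutions, where we need $n$ to be even. An element of order $2$ exists only if $|G|$ is even. Suppose the Sylow $2$-subgroup $P$ has order $2^a$.
\begin{itemize}
\item If $a\ge 2$, Proposition~\ref{corozhcdimn} gives $\varphi(2^a)=2^{a-1}\mid n$, which is even.
\item If $a=1$, then $G$ has a cyclic Sylow $2$-subgroup of order $2$, and we use $|G|>2$. Some element has odd prime order $p$, or else $|G|=2$. Then $\varphi(p)=p-1$ is even and divides $n$, again by Proposition~\ref{corozhcdimn} (or Example~\ref{thmcycphi31} applied to $\langle h\rangle$ with $h$ of order $p$).
\end{itemize}
In both subcases $n$ is even, so $\det\rho(g)=1$ for involutions as well. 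Hence $\rho(G)\le\operatorname{SL}(L)$.

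For the strict containment "$<$": $\rho(G)$ is finite, while $\operatorname{SL}(L)\cong\operatorname{SL}_n(\mathbb{Z})$ is infinite for $n\ge2$. Here $n\ge 2$ holds because $n$ is even and nonzero.
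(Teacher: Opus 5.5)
Your proof is correct and follows the paper's route. Lemma~\ref{lem33detai} together with $\Phi_k(0)=1$ gives $\det\rho(g)=(-1)^{\operatorname{rank}L}$, and Proposition~\ref{corozhcdimn} makes the rank even when $|G|>2$. Using that $\varphi(k)$ is even for $k\ge 3$, so that parity is only needed for involutions, is a harmless refinement. Your finiteness argument for strict containment is an extra the paper does not give; it tacitly assumes $L\neq 0$, and it needs $n$ even also when $|G|$ is odd, which follows the same way from an element of odd prime order.
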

\begin{proof}
For every $g\in G$ of order $m>1,$ combining $\Phi_m(0)=1$ and Lemma~\ref{lem33detai} gives $\det(\rho(g))=(-1)^{\operatorname{rank}L}.$ Since $|G|>2,$ Proposition~\ref{corozhcdimn} implies that $\operatorname{rank}L$ is even, so $\det(\rho(g))=1.$
\end{proof}

\section{FPF $G$-actions on tori}\label{sec:acts}
We recall the standard identification $\operatorname{Hom}(\mathbb{Z}^n,\mathbb{C}^*)\cong (\mathbb{C}^*)^n$ given by evaluating a character at the standard basis vectors $\chi\mapsto (\chi(e_1),\dots,\chi(e_n)).$ Under the exponential map $t\mapsto\exp(2\pi i t),$ this is further identified with $(\mathbb{C}/\mathbb{Z})^n.$ If $\mathbb{Z}^n$ is an integral representation of $G,$ then $\operatorname{Hom}(\mathbb{Z}^n,\mathbb{C}^*)$ admits a natural $G$-action that preserves its algebraic group structure,
\[(g \cdot \chi)(v) = \chi(g^{-1}v).\]

For our purposes, it is convenient to work with the torus $V_{\mathbb{C}}/L$ when studying the induced group action.
\begin{definition}
Let $V$ be a rational FPF representation of $G$, and let $L\subset V$ be a $G$-invariant full-rank lattice. The $G$-action on $V$ extends $\mathbb{C}$-linearly to $V_{\mathbb{C}},$ and it induces an FPF $G$-action on the torus $V_{\mathbb{C}}/L$ given by $g\cdot [x] = [g x].$
\end{definition}

\begin{remark}
Let $\rho$ be an irreducible integral FPF representation of $G.$ Its contragredient representation $\rho^*,$ defined by $\rho^*(g) = \rho(g)^{-\top},$ is also an irreducible integral FPF representation. While $\rho$ and $\rho^*$ are isomorphic as $\mathbb{Q}[G]$-modules since they share the same character, they need not be isomorphic as $\mathbb{Z}[G]$-lattices.
\end{remark}

Given an FPF $G$-action on $V_{\mathbb{C}}/L$ and $g\ne e,$ the fixed-point equation $g\cdot [x] = [x]$ is equivalent to $gx-x\in L,$ which yields
\begin{equation*}
\operatorname{Fix}(g)=\{x\in V_{\mathbb{C}} \mid  gx-x\in L\}\big/L\cong L/(g-\mathrm{id})L.
\end{equation*}
In particular, every element of $\operatorname{Fix}(g)$ is rational, i.e., lies in $V/L.$ The FPF condition guarantees that $\det(g-\mathrm{id})\neq 0$. Moreover, $\operatorname{Fix}(g)$ is a finite subgroup of $V_{\mathbb{C}}/L$ with order $|\operatorname{Fix}(g)|=|\det(g-\mathrm{id})|$. Consequently, only finitely many points admit nontrivial stabilizers, and the quotient space $(V_{\mathbb{C}}/L)/G\cong V_{\mathbb{C}}/(G\ltimes L)$ has finitely many singular points. If $m=\operatorname{ord}(g)>1$ and $\operatorname{rank}L=n,$ it follows from Lemma \ref{lem33detai} that
\begin{equation}\label{eqfix31det}
|\operatorname{Fix}(g)|=|\det(g-\mathrm{id})|=\Phi_m(1)^{\frac{n}{\varphi(m)}}.
\end{equation}
The value of $\Phi_m(1)$ is a standard fact about cyclotomic polynomials
\[\Phi_m(1) =
\begin{cases}
0 & \text{if } m = 1,\\
p & \text{if } m = p^k \text{ is a prime power},\\
1 & \text{otherwise}.
\end{cases}\]

Let $F$ denote the set of non-free points of $V_{\mathbb{C}}/L$ with nontrivial stabilizer, i.e.,
\[F=\{x\in V_{\mathbb{C}}/L\mid G_x\ne\{e\}\}=\bigcup_{g\ne e}\operatorname{Fix}(g),\]
whose points all lie in $V/L.$ We use M\"{o}bius functions \cite{rota1964} to count the points of $F.$ For the M\"{o}bius function of the subgroup lattice of a finite group, see also \cite{hawkes1989}.

Let $\operatorname{Sub}(G)$ denote the set of all subgroups of $G,$ partially ordered by inclusion. The Möbius function $\mu_G: \operatorname{Sub}(G) \times \operatorname{Sub}(G) \to \mathbb{Z}$ is the unique function defined recursively by:
\begin{enumerate}
\item $\mu_G(K, H) = 0$ if $K \nleq H$,
\item $\mu_G(K, H) = 1$ if $K = H$,
\item $\mu_G(K, H) = -\sum_{K \leq L < H} \mu_G(K, L)$ if $K < H.$
\end{enumerate}

Cyclic group $C_m=\langle g\rangle$ is a basic example. For each $d\mid m,$ there exists a unique subgroup $H_d=\langle g^{m/d}\rangle$ of order $d,$ and $H_{d_1}\le H_{d_2}$ if and only if $d_1\mid d_2.$ Hence the subgroup lattice of $C_m$ is order-isomorphic to the divisor lattice of $m,$ so the lattice-theoretic M\"{o}bius function coincides with the classical M\"{o}bius function, $\mu_{C_m}(\{e\},H_d)=\mu(d).$

 For any subgroup $H\leq G,$ define 
\[N(H):=\#\{x\in V_{\mathbb{C}}/L \mid  G_x=H\},\quad \operatorname{Fix}(H)=\bigcap_{h\in H}\operatorname{Fix}(h)\cong L/\sum_{h\in H}(h-\mathrm{id})L.\] 
We obtain the lattice decomposition and Möbius inversion formula
\[|\operatorname{Fix}(H)|= \sum_{H\leq K} N(K),\quad N(H)= \sum_{H\leq K} \mu_G(H,K)|\operatorname{Fix}(K)|.\]
Let $f$ be a function on $\operatorname{Sub}(G).$ Summing over all non-free points, we have
\begin{equation}\label{cardfsing32}
\sum_{x\in F} f(G_x) = \sum_{\{e\}<H\le K}\mu_G(H,K)f(H)|\operatorname{Fix}(K)|.
\end{equation}
Setting $f \equiv 1$ gives 
\begin{equation}\label{cardfsing33}
|F| = -\sum_{\{e\}<H\leq G} \mu_G\bigl(\{e\}, H\bigr)\cdot  |\operatorname{Fix}(H)|.
\end{equation}
Substituting different functions $f$ into \eqref{cardfsing32} yields various geometric quantities associated with the non-free points.

Recall that every FPF group satisfies the $pq$-condition. The following lemmas establish properties for groups satisfying this condition.

\begin{lemma}\label{lem33pqcin}
Let $H$ be a finite group satisfying the $pq$-condition. If $|H|$ is even, then $H$ has a unique involution (hence central).
\end{lemma}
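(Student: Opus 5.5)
We prove that $H$ has exactly one involution; centrality then follows because conjugation permutes involutions.

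\emph{Existence.} Since $|H|$ is even, Cauchy's theorem gives an element of order $2$.

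\emph{Uniqueness.} Suppose $s\neq t$ are two involutions in $H$. Consider the dihedral subgroup $D=\langle s,t\rangle$. The product $st$ has some order $k\ge 1$, and $k\ge 2$ because $s\neq t$. Moreover $s(st)s^{-1}=ts=(st)^{-1}$, so $D$ is dihedral of order $2k$ (the Klein four-group when $k=2$).

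\emph{Case $k=2$.} Then $st=ts$, and $\langle s,t\rangle\cong C_2\times C_2$ is a subgroup of order $4=2\cdot 2$. It is not cyclic, which contradicts the $pq$-condition with $p=q=2$.

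\emph{Case $k>2$.} Choose a prime $p\mid k$ and set $c=(st)^{k/p}$, an element of order $p$. Since $s$ inverts $st$, it also inverts $c$, so $\langle c,s\rangle$ is a dihedral group of order $2p$.
\begin{itemize}
\item If $p$ is odd, $s$ inverts the nontrivial element $c$ and so does not commute with it. Thus $\langle c,s\rangle$ is non-abelian of order $2p$, hence not cyclic, contradicting the $pq$-condition.
\item If $p=2$, then $c$ is an involution and $s$ inverts it, so $sc=cs$. Also $c\neq s$: otherwise $s=(st)^{k/2}\in\langle st\rangle$, and $s$ would commute with $st$, forcing $st=ts$ and $k\le 2$. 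Hence $\langle c,s\rangle\cong C_2\times C_2$, again contradicting the $pq$-condition.
\end{itemize}
In every case we reach a contradiction, so $H$ has a unique involution $z$.

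\emph{Centrality.} For any $h\in H$, the conjugate $hzh^{-1}$ is again an involution, so $hzh^{-1}=z$. Hence $z$ is central in $H$.

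The only step needing care is the subcase $p=2$, where we must check that $c$ and $s$ are distinct before concluding that they generate a Klein four-group.
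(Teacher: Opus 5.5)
Your proof is correct and follows essentially the same route as the paper: the two involutions generate a dihedral group, from which you extract either a Klein four-subgroup $\langle s,(st)^{k/2}\rangle$ or a non-abelian subgroup of order $2p$, matching the paper's even/odd split on $\operatorname{ord}(st)$. Your version is slightly more careful than the paper's. It supplies existence via Cauchy's theorem and explicitly checks that $(st)^{k/2}\neq s$, both of which the paper leaves implicit.
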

\begin{proof}
Suppose $H$ has two distinct involutions $a\ne b.$ Then $\langle a,b\rangle\cong D_{2m}$ with $m=\operatorname{ord}(ab).$ If $m$ is even, then $\langle a,(ab)^{m/2}\rangle\cong C_2\times C_2$ is a non-cyclic subgroup of order $4=pq,$ a contradiction. If $m$ is odd, choose an odd prime $p\mid m.$ Then $D_{2m}$ has a subgroup $D_{2p},$ a non-cyclic group of order $2p,$ contradiction.
\end{proof}

\begin{lemma}\label{lemmuvsh34}
Let $H$ be a finite group satisfying the $pq$-condition. Then for every non-cyclic subgroup $K\le H,$ we have $\mu_H(\{e\},K)=0.$
\end{lemma}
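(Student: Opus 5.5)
The plan is to combine a classical theorem of P.~Hall with the structure of $pq$-groups. Hall's theorem states that for a finite group $K$,
\[\mu_K(\{e\},K)=\sum_{S}(-1)^{\ell}\,\#\{\text{chains } \{e\}=S_0<S_1<\dots<S_\ell=K\},\]
or, more usefully, that $\mu(\{e\},K)=0$ unless $K$ is generated by its minimal subgroups. It also satisfies Crapo's complementation theorem. Note first that $\mu_H(\{e\},K)=\mu_K(\{e\},K)$, since the interval $[\{e\},K]$ in $\operatorname{Sub}(H)$ equals $\operatorname{Sub}(K)$. So it suffices to show that a non-cyclic group $K$ with the $pq$-condition (which is inherited by subgroups) has $\mu_K(\{e\},K)=0$.

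The key step is the standard fact that if $\mu(\{e\},K)\neq 0$, then $K$ is the join of atoms, i.e.\ generated by elements of prime order. I would prove it via the crosscut theorem: with atoms the minimal subgroups, $\mu(\{e\},K)=\sum_{S}(-1)^{|S|}$, where $S$ ranges over sets of atoms whose join is $K$. If the atoms do not generate $K$, this sum is empty and so equals $0$.

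Now let $K$ satisfy the $pq$-condition and be generated by its elements of prime order. I will show that $K$ is then cyclic.
\begin{itemize}
\item For each prime $p$, the subgroups of order $p$ are unique. If $P_1\ne P_2$ both have order $p$, then either they generate a group containing a non-cyclic subgroup of order $p^2$ or of order $pq$ (contradiction), or they generate something larger. To handle this, I would use Theorem~\ref{thm22pqsylow}-type information: a Sylow $p$-subgroup with the $p^2$-condition is cyclic or generalized quaternion, so in either case it has a unique subgroup of order $p$.
\item But distinct Sylow $p$-subgroups could contribute distinct order-$p$ subgroups. Here I would argue that $\langle P_1,P_2\rangle$ can still be controlled, analogously to Lemma~\ref{lem33pqcin} for $p=2$.
\end{itemize}
This is the main obstacle. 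For odd $p$, I would instead use the structure directly. A $pq$-group has all Sylow subgroups cyclic or generalized quaternion, and is therefore (by Theorem~\ref{thm:FPF_classifi}-style structure, since these groups are exactly those with periodic cohomology, but more simply using the semidirect decomposition $C_m\rtimes H$ together with Lemma~\ref{lem:3.1primact}) such that all elements of prime order lie in a central-ish subgroup. Concretely, Lemma~\ref{lem:3.1primact} shows that elements of prime order of $H$ act trivially on $C_m$. Iterating this down to $K=\ker\alpha$, the subgroup $\Omega$ generated by all prime-order elements is seen to be a direct product of cyclic groups of pairwise coprime orders, together with the unique central involution (Lemma~\ref{lem33pqcin}). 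Hence $\Omega$ is cyclic.

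Consequently, if $\mu_K(\{e\},K)\ne0$, then $K=\Omega$ is cyclic. Contrapositively, every non-cyclic $K$ has $\mu=0$. If the route through the classification proves awkward for a general $pq$-group $K$ (which need not a priori be of Types I–V), I would fall back on the following argument. For each prime $p$, the $pq$-condition forces every subgroup generated by two elements of order $p$ to be a $p$-group: otherwise a dihedral-like argument, as in Lemma~\ref{lem33pqcin}, produces a non-cyclic subgroup of order $pq$. A $p$-group with the $p^2$-condition has a unique subgroup of order $p$. So each prime has a unique subgroup $\Omega_p$ of order $p$, which is normal. Distinct $\Omega_p,\Omega_q$ normalize each other and intersect trivially, so they commute. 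Hence their product is cyclic of squarefree order and equals the join of all atoms, giving the conclusion.
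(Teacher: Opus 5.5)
\textbf{There is a genuine gap.} Your whole argument rests on one structural claim: a group $K$ with the $pq$-condition that is generated by its elements of prime order must be cyclic. Your fallback derives this from the assertion that two elements of order $p$ always generate a $p$-group, so that each prime has a unique subgroup $\Omega_p$ of order $p$. That assertion is false.

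The dihedral argument only works for $p=2$, because two involutions generate a dihedral group; this is exactly Lemma~\ref{lem33pqcin}. For odd $p$ nothing of the sort holds. Take $K=\operatorname{SL}_2(\mathbb{F}_3)\cong Q_8\rtimes C_3$. It is FPF (Type~III with $m=1$, $n=3$), so it satisfies the $pq$-condition. It has four subgroups of order $3$, and any two of them generate all of $K$. So $K$ is non-cyclic yet generated by its minimal subgroups, and the atom form of the crosscut theorem says nothing about $\mu_K(\{e\},K)$. The same happens for $2O$ and $\operatorname{SL}_2(\mathbb{F}_5)$.

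Your first route fails for the same reason. Iterating Lemma~\ref{lem:3.1primact} down to $\ker\alpha$ lands in exactly these groups, whose prime-order elements do not generate a cyclic subgroup. That lemma also relies on the FPF semidirect decomposition, not merely on the $pq$-condition.

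Your conclusion is correct when $|K|$ is odd: all Sylow subgroups are cyclic, $K$ is metacyclic, and it has a unique subgroup of each prime order, as in the paper's Case~2. For even order a different mechanism is needed. The paper argues by induction. Proper non-cyclic subgroups contribute nothing, so $\mu_H(\{e\},H)=-\sum_{d\ \text{squarefree}}\mu(d)c_d$. For $|H|$ even this sum vanishes because $C\mapsto C\langle z\rangle$, with $z$ the unique central involution, gives $c_{2d}=c_d$. For $\operatorname{SL}_2(\mathbb{F}_3)$ the sum is $1-1-4+4=0$.

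If you want to stay within Hall's framework, use the dual (coatom) crosscut instead: $\mu_K(\{e\},K)=0$ whenever $\{e\}$ is not an intersection of maximal subgroups. For $|K|$ even there are two cases:
\begin{enumerate}
\item If $z$ lies in every maximal subgroup, then $\mu=0$.
\item Otherwise some maximal $M$ misses $z$. Then $|M|$ is odd and $K=M\times\langle z\rangle$, so the subgroup lattice splits and $\mu_K(\{e\},K)=-\mu_M(\{e\},M)$. Here $M$ is non-cyclic of odd order, which the atom argument handles once uniqueness of prime-order subgroups is justified as above.
\end{enumerate}
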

\begin{proof}
We prove by induction on $|H|.$ If $H$ is cyclic, there is nothing to show. Assume $H$ non-cyclic and $\mu_H(\{e\},K)=0$ for every proper non-cyclic $K<H.$ Hence, by the recursive definition of $\mu_H,$
\[\mu_H(\{e\},H)=-\sum_{K<H}\mu_H(\{e\},K)
= -\sum_{\substack{K<H\\ K\text{ cyclic}}}\mu(|K|).\]
Let $c_d$ be the number of cyclic subgroups of $H$ of order $d.$ Since $H$ is non-cyclic, $c_{|H|}=0.$ Only square-free $d$ contribute, so
\[\mu_H(\{e\},H)=-S(H),\qquad S(H):=\sum_{\substack{d\mid |H|\\ d\text{ square-free}}}\mu(d)c_d.\]
We split into two cases to show $S(H) = 0.$

\textbf{Case 1: $|H|$ even.} By Lemma \ref{lem33pqcin}, $H$ possesses a unique involution $z.$ For each odd square-free $d\mid |H|,$ the map 
\[C\mapsto C\langle z\rangle\cong C_d\times C_2\cong C_{2d}\]
 gives a bijection between cyclic subgroups of order $d$ and those of order $2d.$ Thus $c_{2d}=c_d.$ Since $c_1=c_2=1$ and $\mu(d)+\mu(2d)=0,$ we obtain
 \[S(H)=\mu(1)c_1+\mu(2)c_2+\sum_{\substack{d>1\\ d\text{ odd square-free}}}\mu(d)(c_d-c_{2d})=0.\]

\textbf{Case 2: $|H|$ odd.} By the $pq$-condition, every Sylow $p$-subgroup is cyclic; otherwise $C_p\times C_p$ occurs. So by Theorem 5.4.1 of \cite{wolf2011spaces}, $H$ is metacyclic: 
\[H=\langle a,b\mid a^m=b^n=1,\ bab^{-1}=a^r\rangle\]
 with $(m,n)=1$ and $m,n$ odd. For each square-free $d\mid mn,$ write $d=d_1d_2$ with $d_1\mid m,$ $d_2\mid n$ and $(d_1,d_2)=1.$ Any cyclic subgroup of order $d$ must intersect $\langle a\rangle$ in the unique subgroup of order $d_1$ and project onto the unique subgroup of order $d_2$ in $H/\langle a\rangle.$ Hence it contains the subgroup $K_1=\langle a^{m/d_1}, b^{n/d_2}\rangle,$ which has order $d$ and is cyclic (otherwise it would contain a non-cyclic subgroup of order $pq,$ contradicting the $pq$-condition). Thus $K_1$ is the unique cyclic subgroup of order $d,$ so $c_d=1.$ Then
\[S(H)=\sum_{\substack{d\mid mn\\ d\text{ square-free}}}\mu(d)=0.\]
In both cases $S(H)=0.$ This completes the induction.
\end{proof}

Although \eqref{cardfsing33} involves contributions from non-cyclic subgroups, Lemma~\ref{lemmuvsh34} implies that only cyclic subgroups need to be considered. This leads to the following theorem.

\begin{theorem}\label{thmnff35}
For an FPF $G$-action on $n$-dimensional torus $V_{\mathbb{C}}/L,$ the number of non-free points is given by 
\[|F| = -\sum_{d\mid |G|} \mu(d)\cdot c_d(G)\cdot\Phi_{d}(1)^{\frac{n}{\varphi(d)}},\]
where $c_d(G)$ is the number of cyclic subgroups of $G$ of order $d.$
\end{theorem}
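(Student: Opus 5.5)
The plan is to start from the Möbius inversion formula \eqref{cardfsing33},
\[|F| = -\sum_{\{e\}<H\leq G} \mu_G(\{e\},H)\,|\operatorname{Fix}(H)|,\]
and simplify it in three moves: discard the non-cyclic subgroups, evaluate the fixed-point counts of cyclic subgroups, and then regroup the remaining sum by subgroup order.

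\textbf{Step 1: discard non-cyclic subgroups.} By Theorem~\ref{thm22pqsylow}, $G$ satisfies the $pq$-condition, so Lemma~\ref{lemmuvsh34} gives $\mu_G(\{e\},H)=0$ for every non-cyclic $H\le G$. Only nontrivial cyclic subgroups $H=\langle g\rangle$ survive in the sum.

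\textbf{Step 2: compute $\mu_G(\{e\},H)$ for cyclic $H$.} The key observation is that $\mu_G(\{e\},H)$ depends only on the interval $[\{e\},H]$ in $\operatorname{Sub}(G)$. This is clear from the recursive definition, since the recursion only involves subgroups $L$ with $\{e\}\le L\le H$. That interval is the subgroup lattice of $H\cong C_d$. By the computation for cyclic groups given just before Lemma~\ref{lem33pqcin}, we get $\mu_G(\{e\},H)=\mu(d)$ with $d=|H|$.

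\textbf{Step 3: compute $|\operatorname{Fix}(H)|$ for cyclic $H=\langle g\rangle$.} Every $h\in H$ is a power of $g$, so $\operatorname{Fix}(g)\subseteq\operatorname{Fix}(h)$, and hence $\operatorname{Fix}(H)=\operatorname{Fix}(g)$. Next, $g$ has order $d$, and $g^i\neq e$ for $1\le i\le d-1$. The FPF condition therefore makes Lemma~\ref{lem33detai} applicable: with respect to a $\mathbb{Z}$-basis of $L$, the matrix of $g$ lies in $\operatorname{GL}_n(\mathbb{Z})$, where $n=\operatorname{rank}L=\dim_{\mathbb{Q}}V$ is the complex dimension of the torus. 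Then \eqref{eqfix31det} gives
\[|\operatorname{Fix}(H)|=\Phi_d(1)^{n/\varphi(d)}.\]
The exponent is an integer because $\varphi(d)$ divides $n$, which is part of Lemma~\ref{lem33detai}.

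\textbf{Step 4: regroup by order.} Collecting the cyclic subgroups by their order $d\mid |G|$, $d>1$, the sum becomes
\[|F|=-\sum_{1<d\mid |G|}\mu(d)\,c_d(G)\,\Phi_d(1)^{n/\varphi(d)}.\]
The term $d=1$ may be added harmlessly, because $\Phi_1(1)=0$ and the exponent $n/\varphi(1)=n$ is positive. This yields the stated formula.

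The only points requiring care are the locality of the Möbius function in Step 2 and the fact that the exponent is integral. I expect the locality argument to be the main (mild) obstacle; it is handled by a direct induction on the recursive definition.
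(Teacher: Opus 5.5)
Your proposal is correct and follows essentially the same route as the paper, which combines \eqref{cardfsing33}, Lemma~\ref{lemmuvsh34} (via the $pq$-condition) and \eqref{eqfix31det}. You additionally spell out details the paper leaves implicit: $\mu_G(\{e\},H)=\mu(|H|)$ because $\mu_G$ depends only on the interval $[\{e\},H]$, $\operatorname{Fix}(\langle g\rangle)=\operatorname{Fix}(g)$, and the $d=1$ term vanishes since $\Phi_1(1)=0$.
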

\begin{proof}
Since $G$ is an FPF group, $G$ satisfies the $pq$-condition. Then the assertion follows directly from \eqref{eqfix31det}, \eqref{cardfsing33} and Lemma~\ref{lemmuvsh34}.
\end{proof}

Theorem~\ref{thmnff35} implies that the number $|F|$ is independent of the choice of the $G$-invariant lattice $L,$ as it depends only on the dimension of the rational FPF representation $V.$ The same phenomenon holds for the singularities and Euler characteristic of the quotient space. 

We now turn to the singular locus of the quotient space. Let $\pi: V_{\mathbb{C}}/L\to V_{\mathbb{C}}/(G\ltimes L)$ be the canonical projection. Since every point with nontrivial stabilizer lies in $F,$ the singular set of $V_{\mathbb{C}}/(G\ltimes L)$ is precisely $\pi(F).$

\begin{theorem}\label{thmpif36}
For an FPF $G$-action on $n$-dimensional torus $V_{\mathbb{C}}/L,$ the number of singular points of the quotient space $V_{\mathbb{C}}/(G\ltimes L)$ is given by 
\[|\pi(F)|=\frac{1}{|G|}\sum_{d\mid |G|} (\varphi(d)-\mu(d))\cdot c_d(G)\cdot\Phi_{d}(1)^{\frac{n}{\varphi(d)}}.\]
\end{theorem}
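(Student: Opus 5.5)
Our plan is to count orbits by weighting each non-free point by the reciprocal of its orbit size, and then evaluate the resulting sum with \eqref{cardfsing32}. Since $G$ is finite, each orbit $G\cdot x\subset F$ has exactly $|G|/|G_x|$ points. Hence
\[
|\pi(F)|=\sum_{x\in F}\frac{|G_x|}{|G|}.
\]
Taking $f(H)=|H|/|G|$ in \eqref{cardfsing32} gives
\[
|\pi(F)|=\frac{1}{|G|}\sum_{\{e\}<H\le K}\mu_G(H,K)\,|H|\,|\operatorname{Fix}(K)|
=\frac{1}{|G|}\sum_{\{e\}<K\le G}\Bigl(\sum_{\{e\}<H\le K}\mu_G(H,K)|H|\Bigr)|\operatorname{Fix}(K)|.
\]

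The key step is to compute the inner coefficient $a(K):=\sum_{H\le K}\mu_G(H,K)|H|$. Note that $\mu_G(H,K)=\mu_K(H,K)$, because the interval $[H,K]$ in $\operatorname{Sub}(G)$ is the same as in $\operatorname{Sub}(K)$. The full sum over $H\le K$, including $H=\{e\}$, is the Möbius inversion of $|H|=\sum_{C\le H,\ C\text{ cyclic}}\varphi(|C|)$, since every element generates exactly one cyclic subgroup. So $a(K)=\varphi(|K|)$ if $K$ is cyclic and $a(K)=0$ otherwise. Dropping the term $H=\{e\}$ then gives
\[
\sum_{\{e\}<H\le K}\mu_G(H,K)|H|=\varphi(|K|)\,[K\text{ cyclic}]-\mu_K(\{e\},K).
\]

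By Lemma~\ref{lemmuvsh34}, which applies because $G$ satisfies the $pq$-condition, $\mu_K(\{e\},K)=0$ for non-cyclic $K$. For cyclic $K$ of order $d$ it equals the classical $\mu(d)$. So only cyclic $K$ contribute, each with coefficient $\varphi(d)-\mu(d)$. Substituting $|\operatorname{Fix}(K)|=\Phi_d(1)^{n/\varphi(d)}$ from \eqref{eqfix31det} and grouping the cyclic subgroups by order yields the stated formula.

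The term $d=1$ needs a separate check. There $\varphi(1)-\mu(1)=0$, while $\Phi_1(1)^{n}=0$ is degenerate; either way this term is absent because $K>\{e\}$, so including it in the sum is harmless.

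The main obstacle is the identity $a(K)=\varphi(|K|)[K\text{ cyclic}]$. We will prove it by defining $g(C)=\varphi(|C|)$ for cyclic $C$ and $g(C)=0$ otherwise. Then $\sum_{C\le H}g(C)=|H|$, and Möbius inversion on the interval $[\{e\},K]$ gives $\sum_{H\le K}\mu(H,K)|H|=g(K)$.
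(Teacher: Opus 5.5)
Your argument is correct, but it takes a different route from the paper.

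\emph{The paper's route.} The paper applies Burnside's lemma to the $G$-action on $F$. The term $g=e$ contributes $|F|$, taken from Theorem~\ref{thmnff35}, and this supplies the $-\mu(d)$ part. Each $g\neq e$ contributes $|\operatorname{Fix}(g)|=\Phi_d(1)^{n/\varphi(d)}$. Grouping the $\varphi(d)c_d(G)$ elements of order $d$ then supplies the $\varphi(d)$ part.

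\emph{Your route.} You weight each non-free point by $|G_x|/|G|$ and feed $f(H)=|H|/|G|$ into \eqref{cardfsing32}. This requires the extra identity $\sum_{H\le K}\mu_K(H,K)|H|=\varphi(|K|)\,[K\text{ cyclic}]$. This is P.~Hall's identity, and you prove it correctly by M\"obius inversion of $|H|=\sum_{C\le H\text{ cyclic}}\varphi(|C|)$.

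\emph{How they relate.} The two decompositions match term by term. Your $\varphi(|K|)[K\text{ cyclic}]$ contribution is exactly the paper's sum over non-identity elements. Your dropped $H=\{e\}$ term, $-\sum_{K>\{e\}}\mu_G(\{e\},K)|\operatorname{Fix}(K)|$, is exactly $|F|$ by \eqref{cardfsing33}. In effect, your inversion re-derives the double count $\sum_{x\in F}|G_x|=\sum_{g}|\operatorname{Fix}(g)\cap F|$ that underlies Burnside's lemma.

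\emph{What each buys.} The paper's route is shorter, since it reuses Theorem~\ref{thmnff35} and needs no second inversion. Yours stays entirely within the M\"obius formalism of \eqref{cardfsing32}, illustrating the paper's remark that other choices of $f$ yield other invariants. It also makes clear that the $pq$-condition, via Lemma~\ref{lemmuvsh34}, is needed only to kill $\mu_G(\{e\},K)$ for non-cyclic $K$. The $\varphi$-part holds for any finite group.
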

\begin{proof}
Applying Burnside's lemma to the $G$-action on $F,$ we have
\begin{align*}
|\pi(F)|=&\frac{1}{|G|}\sum_{g\in G}|\operatorname{Fix}(g) \cap F|=\frac{1}{|G|}\Bigl(|F|+\sum_{g\in G}|\operatorname{det}(g-\mathrm{id})|\Bigr)  \\
=& \frac{|F|}{|G|}+\frac{1}{|G|}\sum_{d\mid |G|}c_d(G)\cdot\varphi(d)\cdot\Phi_{d}(1)^{\frac{n}{\varphi(d)}}
\end{align*}
where the last equality holds because there are $c_d(G)\varphi(d)$ elements of order $d.$ Substituting $|F|$ from Theorem \ref{thmnff35} completes the proof.
\end{proof}

Recall the orbifold Euler characteristic of quotient orbifolds from \cite{Sataketgb57}. For a finite group action on a space $X$ with finite CW-structure compatible with the action, the orbifold Euler characteristic is defined by
\[\chi_{\mathrm{orb}}(X/G):=\frac{1}{|G|}\sum_{g\in G}\chi(\operatorname{Fix}(g)).\] 
For an FPF $G$-action on $V_{\mathbb{C}}/L,$ since $\operatorname{Fix}(g)$ is finite for $g\ne e$ and $\chi(V_{\mathbb{C}}/L)=0,$ we obtain
\begin{equation}\label{equchiord9}
\chi_{\mathrm{orb}}(V_{\mathbb{C}}/(G\ltimes L))=|\pi(F)|-\frac{1}{|G|}|F|.
\end{equation}

\begin{corollary}\label{cor:eulerg37}
For an FPF $G$-action on $n$-dimensional torus $V_{\mathbb{C}}/L,$ the orbifold Euler characteristic of the quotient $V_{\mathbb{C}}/(G\ltimes L)$ is given by
\[\chi_{\mathrm{orb}}(V_{\mathbb{C}}/(G\ltimes L))=\frac{1}{|G|}\sum_{d\mid |G|} \varphi(d)\cdot c_d(G)\cdot\Phi_{d}(1)^{\frac{n}{\varphi(d)}}.\]
($\varphi(d)c_d(G)$ is the number of elements of order $d$ in $G$).
\end{corollary}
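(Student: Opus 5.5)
The plan is to combine the two closed formulas already established, Theorem~\ref{thmnff35} for $|F|$ and Theorem~\ref{thmpif36} for $|\pi(F)|$, through the identity \eqref{equchiord9}. There is also a direct route from the definition of $\chi_{\mathrm{orb}}$, and I would use it to check the result.

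The direct route goes as follows. Use $\chi_{\mathrm{orb}}=\frac{1}{|G|}\sum_{g\in G}\chi(\operatorname{Fix}(g))$. The identity contributes $\chi(V_{\mathbb{C}}/L)=0$, since a real torus has Euler characteristic zero. For $g\neq e$, the set $\operatorname{Fix}(g)$ is finite with $|\det(g-\mathrm{id})|$ points, so $\chi(\operatorname{Fix}(g))=|\operatorname{Fix}(g)|=\Phi_{\operatorname{ord}(g)}(1)^{n/\varphi(\operatorname{ord}(g))}$ by \eqref{eqfix31det}. Grouping the non-identity elements by their order $d$, there are $\varphi(d)c_d(G)$ elements of order $d$, because each cyclic subgroup of order $d$ has $\varphi(d)$ generators. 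This gives
\[\chi_{\mathrm{orb}}=\frac{1}{|G|}\sum_{\substack{d\mid |G|\\ d>1}}\varphi(d)\,c_d(G)\,\Phi_d(1)^{\frac{n}{\varphi(d)}}.\]

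The only delicate point is the term $d=1$. There $\Phi_1(1)=0$ and $\varphi(1)=1$, so the term is $0^{n}=0$ whenever $n\geq 1$. It therefore agrees with the torus contribution $0$, and the sum can be taken over all $d\mid |G|$ as stated. The same convention ($0^n=0$ for the $d=1$ term) is implicitly used in Theorems~\ref{thmnff35} and~\ref{thmpif36}, so the results are consistent.

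As the check, subtract $\frac{1}{|G|}|F|$ from the formula in Theorem~\ref{thmpif36}. Theorem~\ref{thmpif36} gives $\frac{1}{|G|}\sum(\varphi(d)-\mu(d))\,c_d\,\Phi_d(1)^{n/\varphi(d)}$, and $\frac{1}{|G|}|F|=-\frac{1}{|G|}\sum\mu(d)\,c_d\,\Phi_d(1)^{n/\varphi(d)}$. The $\mu(d)$ terms cancel, which reproduces the stated formula via \eqref{equchiord9}.
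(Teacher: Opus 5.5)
Your proposal is correct, and your ``check'' is exactly the paper's proof: substitute Theorems~\ref{thmnff35} and~\ref{thmpif36} into \eqref{equchiord9}, and the $\mu(d)$ terms cancel. Your direct route sums $|\operatorname{Fix}(g)|$ over $g\neq e$ grouped by order, with the $d=1$ term vanishing as $0^n$; this is just the computation behind \eqref{equchiord9} itself, and it shows that this particular formula does not depend on Lemma~\ref{lemmuvsh34}.
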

\begin{proof}
This follows from \eqref{equchiord9} together with Theorems~\ref{thmnff35} and~\ref{thmpif36}.
\end{proof}

\section{Examples: cyclic groups and generalized quaternion groups}\label{sec4:exams}

In Section \ref{sec:acts}, we established singularity-counting formulas for FPF group actions on tori. We now apply our general theory to two concrete examples: cyclic groups and generalized quaternion groups. For these groups, the number $c_d(G)$ can be computed explicitly.

We now restrict our attention to cyclic groups $C_m=\langle g\rangle.$ 
\begin{theorem}\label{thmcardf34}
For an FPF $C_m$-action on $n$-dimensional torus $V_{\mathbb{C}}/L,$ the number of non-free points is given by 
\[|F| =1-\omega(m)+ \sum_{p \mid m} p^{\frac{n}{p-1}},\]
where $p$ ranges over the distinct prime divisors of $m,$ and $\omega(m)$ denotes the number of distinct prime divisors of $m.$
\end{theorem}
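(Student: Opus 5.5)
The plan is to specialize Theorem~\ref{thmnff35} to $G=C_m$. For a cyclic group there is exactly one subgroup of each order $d\mid m$, so $c_d(C_m)=1$ for every divisor $d$. The theorem then reads
\[|F|=-\sum_{d\mid m}\mu(d)\,\Phi_d(1)^{\frac{n}{\varphi(d)}}.\]
Because of the factor $\mu(d)$, only square-free $d$ survive. I will split these into three groups according to the standard evaluation of $\Phi_d(1)$ recalled before \eqref{cardfsing33}.

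\textbf{The term $d=1$.} Here $\Phi_1(1)=0$. This term needs care, since $\Phi_1(1)^{n/\varphi(1)}=0^n$ is not what \eqref{cardfsing33} intends. In \eqref{cardfsing33} the sum runs over $\{e\}<H$, so the trivial subgroup does not appear. Equivalently, the $d=1$ contribution is $0$, because $|\operatorname{Fix}(\{e\})|$ is not part of the sum. So I will read Theorem~\ref{thmnff35} with $d>1$, and check this against \eqref{cardfsing33} directly.

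\textbf{Prime $d=p$.} Here $\mu(p)=-1$, $\varphi(p)=p-1$ and $\Phi_p(1)=p$. Each such term contributes $+p^{n/(p-1)}$ to $|F|$. The exponent is an integer by Proposition~\ref{corozhcdimn}, or simply by Lemma~\ref{lem33detai}.

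\textbf{Square-free $d$ with at least two prime factors.} Here $\Phi_d(1)=1$, so each such term contributes $-\mu(d)$. Summing these gives
\[-\sum_{\substack{d\mid m,\ d \text{ square-free}\\ \omega(d)\ge 2}}\mu(d)=-\Bigl(\sum_{d\mid m}\mu(d)-\mu(1)-\sum_{p\mid m}\mu(p)\Bigr)=-(0-1+\omega(m))=1-\omega(m).\]
This uses $\sum_{d\mid m}\mu(d)=0$ for $m>1$.

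Adding the three contributions gives $|F|=1-\omega(m)+\sum_{p\mid m}p^{n/(p-1)}$, as claimed. The main obstacle is just the bookkeeping of the $d=1$ term, making sure it is excluded rather than contributing $\mu(1)\cdot 0^n$. The degenerate case $m=1$ should also be checked separately: there both sides equal $0$, since $1-0+0\neq 0$ would fail otherwise. To be safe I would state the result for $m>1$, or note that for $m=1$ the set $F$ is empty and the formula must be read with that convention.
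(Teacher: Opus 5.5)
Your proposal is correct and follows the paper's proof. Like the paper, you set $c_d=1$ in Theorem~\ref{thmnff35}, split the square-free divisors into primes and those with $\omega(d)\ge 2$, and use $\sum_{d\mid m}\mu(d)=0$, which is the paper's binomial identity.

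Your $m=1$ remark needs correcting. At $m=1$ the right-hand side equals $1$ while $F=\emptyset$, so the two sides do not both equal $0$. The formula therefore genuinely requires $m>1$, which the paper also assumes implicitly.

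The $d=1$ term needs no special care, since $0^n=0$ for $n\ge 1$.
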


\begin{proof}
For each $d\mid m,$ we have $c_d=1.$ By Theorem~\ref{thmnff35}, we obtain
\begin{equation}\label{eq2:cyclmobius}
|F|=-\sum_{d\mid m}\mu(d)\cdot \Phi_d(1)^{\frac{n}{\varphi(d)}},
\end{equation}
Using the properties of $\mu(d),$ a direct simplification of \eqref{eq2:cyclmobius} gives
\begin{align*}
|F|=-\sum_{\substack{d=p_1\cdots p_r\\ d\mid m}}(-1)^r\cdot\Phi_{d}(1)^{\frac{n}{\varphi(d)}}=\sum_{p\mid m}p^{\frac{n}{p-1}}-\sum_{\substack{p_1\cdots p_r\mid m\\ r\geq 2}}(-1)^r=1-\omega(m)+\sum_{p\mid m}p^{\frac{n}{p-1}},
\end{align*}
where $p_1,\dots,p_r$ are distinct prime divisors of $m.$ The final equality holds because
\[\sum_{\substack{p_1\cdots p_r\mid m\\ r\geq 2}} (-1)^r=\sum_{r=2}^{\omega(m)}\binom{\omega(m)}{r}(-1)^r=\omega(m)-1.\]
\end{proof}

\begin{example}\label{exa35cma3cs}
Let $C_m = \langle g \rangle $ act by multiplication $g\cdot x = \zeta_m x$ on $\mathbb{Z}[\zeta_m],$ and the matrix of $g$ under the standard integral basis of $\mathbb{Z}[\zeta_m]$ is exactly the companion matrix of $\Phi_m(x).$ This induces an FPF $C_m$-action on $\mathbb{Q}(\zeta_m)_{\mathbb{C}}/\mathbb{Z}[\zeta_m].$ 
\begin{itemize}
\item If $m=p$ is prime, then $\varphi(p)=p-1$ and $|F|=p.$ In fact, $F=\operatorname{Fix}(g)$ is a cyclic subgroup of order $p,$ generated by $\Bigl( \frac{1}{p}, \frac{2}{p}, \dots, \frac{p-1}{p}\Bigr)\in (\mathbb{Q}/\mathbb{Z})^{p-1}.$
\item If $m=p^k$ is a prime power, then $\varphi(p^k)=p^{k-1}(p-1)$ and $|F|=p^{p^{k-1}}.$ In this case, $F=\operatorname{Fix}(g^{p^{k-1}})$ is an abelian subgroup isomorphic to $(\frac{1}{p}\mathbb{Z}/\mathbb{Z})^{p^{k-1}}.$
\item If $m$ has at least two distinct prime factors, then $F$ is not a subgroup of torus. Indeed, for any $X = (x_1,\dots, x_{\varphi(m)}) \in F,$ each nonzero coordinate $x_i \in \mathbb{Q}/\mathbb{Z}$ is of the form $k/p$ for some $k\in\{1, \ldots, p-1\}$ and some prime divisor $p$ of $m.$ However, the sum of two such elements may have a coordinate with denominator divisible by two distinct primes $p$ and $q,$ which cannot occur in any element of $F.$ Hence $F$ is not closed under addition.
\end{itemize}
\end{example}

\begin{theorem}\label{thm36cardpif}
For an FPF $C_m$-action on $n$-dimensional torus $V_{\mathbb{C}}/L,$ the number of singular points of the quotient space $V_{\mathbb{C}}/(C_m\ltimes L)$ is given by 
\[|\pi(F)|=1-\frac{\omega(m)}{m}+\frac{1}{m}\sum_{p\mid m}p^{\frac{n}{p-1}}+\frac{1}{m}\sum_{p^k\mid m} \varphi(p^k)\Bigl(p^{\frac{n}{\varphi(p^k)}}-1\Bigr),\]
where $p$ ranges over the distinct prime divisors of $m,$ and $p^k$ ranges over all prime power divisors of $m.$
\end{theorem}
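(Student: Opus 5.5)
The plan is to specialize Theorem~\ref{thmpif36} to $G=C_m$, as in the proof of Theorem~\ref{thmcardf34}. For the cyclic group, $c_d(C_m)=1$ for every $d\mid m$. Theorem~\ref{thmpif36} then reads
\[|\pi(F)|=\frac{1}{m}\sum_{d\mid m}\bigl(\varphi(d)-\mu(d)\bigr)\Phi_d(1)^{\frac{n}{\varphi(d)}}.\]
This splits into two sums. Equivalently, one can use the Burnside computation from the proof of Theorem~\ref{thmpif36}:
\[|\pi(F)|=\frac{|F|}{m}+\frac{1}{m}\sum_{d\mid m}\varphi(d)\Phi_d(1)^{\frac{n}{\varphi(d)}}.\]

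The $-\mu(d)$ part is exactly $|F|/m$. By Theorem~\ref{thmcardf34} it equals
\[\frac{1}{m}\Bigl(1-\omega(m)+\sum_{p\mid m}p^{\frac{n}{p-1}}\Bigr).\]
This already produces the terms $-\omega(m)/m$ and $\frac1m\sum_{p}p^{n/(p-1)}$ of the statement, together with a stray $\frac1m$.

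For the $\varphi(d)$ part, I will sort the divisors $d$ of $m$ using the stated values of $\Phi_d(1)$.
\begin{itemize}
\item For $d=1$: here $\Phi_1(1)=0$, but in the Burnside sum this term is $|\operatorname{Fix}(e)\cap F|=|F|$, already counted separately. So the $d=1$ term of the $\varphi$-sum should be read as $0$. This convention point deserves care: the formula in Theorem~\ref{thmpif36} implicitly treats the $d=1$ contribution as $(1-1)\cdot(\text{anything})=0$.
\item For $d=p^k$ with $k\ge1$: the term is $\varphi(p^k)\,p^{n/\varphi(p^k)}$.
\item For $d$ with at least two distinct prime factors: $\Phi_d(1)=1$, so the term is just $\varphi(d)$.
\end{itemize}

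The non-prime-power terms are handled with $\sum_{d\mid m}\varphi(d)=m$:
\[\sum_{\substack{d\mid m\\ d\text{ not a prime power},\ d>1}}\varphi(d)=m-1-\sum_{p^k\mid m,\ k\ge1}\varphi(p^k).\]
Adding the prime-power terms then gives
\[\sum_{d\mid m,\ d>1}\varphi(d)\Phi_d(1)^{\frac{n}{\varphi(d)}}=m-1+\sum_{p^k\mid m}\varphi(p^k)\Bigl(p^{\frac{n}{\varphi(p^k)}}-1\Bigr).\]
Dividing by $m$ and combining with the $|F|/m$ part, the $-\frac1m$ here cancels the stray $+\frac1m$ from Theorem~\ref{thmcardf34}. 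The result is exactly the claimed formula, with leading term $1$.

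The only delicate step is the bookkeeping at $d=1$, namely making sure the identity element is counted once, as $|F|$, rather than via $\Phi_1(1)^{n}$. Everything else is routine algebra with the totient identity.
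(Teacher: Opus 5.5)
Your proposal is correct and is essentially the paper's proof. You start from the Burnside form $|\pi(F)|=\frac1m\bigl(|F|+\sum_{d\mid m}\varphi(d)\Phi_d(1)^{n/\varphi(d)}\bigr)$, split the divisors into prime powers and those with $\omega(d)>1$, use $\sum_{d\mid m}\varphi(d)=m$, and substitute $|F|$ from Theorem~\ref{thmcardf34}, exactly as the paper does. The $d=1$ bookkeeping needs no special convention, since that term is literally $\varphi(1)\Phi_1(1)^{n}=0^n=0$ for $n\ge 1$.
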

\begin{proof}
It follows from Theorem~\ref{thmpif36} that
\begin{align}
|\pi(F)|&=\frac{1}{m}\Bigl(|F|+\sum_{d\mid m} \varphi(d)\cdot\Phi_d(1)^{\frac{n}{\varphi(d)}} \Bigr)  \notag \\
&=\frac{1}{m}\Bigl(|F|+\sum_{p^k\mid m} \varphi(p^k)\cdot p^{\frac{n}{\varphi(p^k)}}+ \sum_{\substack{d\mid m \\ \omega(d)>1}}\varphi(d)\Bigr). \label{pifexpc35}
\end{align}
Since $\sum_{d\mid m}\varphi(d)=m,$ the last sum in \eqref{pifexpc35} becomes
\[\sum_{\substack{d\mid m \\ \omega(d)>1}} \varphi(d)
= m-1-\sum_{p^k\mid m} \varphi(p^k).\]
Substituting this and $|F|$ from Theorem \ref{thmcardf34} completes the proof.
\end{proof}

\begin{example}
Let $m=p$ be prime and $n=p-1.$ By Example \ref{exa35cma3cs} we have $F=\operatorname{Fix}(g)$ with $|F|=p.$ Every nonzero element in $F$ has full stabilizer $C_p,$ so the $C_p$-action on $F$ is trivial, and we immediately get $|\pi(F)|= p.$ Theorem \ref{thm36cardpif} gives the same value
\[|\pi(F)| = 1-\frac{1}{p}+\frac{1}{p}\cdot p+\frac{(p-1)^2}{p}=p.\]
\end{example}

\begin{corollary}
For an FPF $C_m$-action on $V_{\mathbb{C}}/L,$ the orbifold Euler characteristic of $V_{\mathbb{C}}/(C_m\ltimes L)$ is given by
\[\chi_{\mathrm{orb}}(V_{\mathbb{C}}/(C_m\ltimes L))=\frac{m-1}{m}+\frac{1}{m}\sum_{p^k\mid m} \varphi(p^k)\Bigl(p^{\frac{n}{\varphi(p^k)}}-1\Bigr),\]
where the sum ranges over all prime power divisors $p^k$ of $m.$ 
\end{corollary}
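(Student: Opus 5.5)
The plan is to combine Corollary~\ref{cor:eulerg37} with the relation $\chi_{\mathrm{orb}}=|\pi(F)|-\frac{1}{m}|F|$ from \eqref{equchiord9}. The cleanest route is to evaluate the formula of Corollary~\ref{cor:eulerg37} directly for $G=C_m$, where $c_d(C_m)=1$ for every $d\mid m$. This gives
\[\chi_{\mathrm{orb}}=\frac{1}{m}\sum_{d\mid m}\varphi(d)\,\Phi_d(1)^{\frac{n}{\varphi(d)}},\]
and the rest is bookkeeping with the divisors of $m$.

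I would split the sum over $d\mid m$ into three classes, using the listed values of $\Phi_d(1)$.
\begin{itemize}
\item The term $d=1$ contributes $\varphi(1)\Phi_1(1)^{n}$. Here one must remember that $\Phi_1(1)=0$ and $\operatorname{Fix}(e)$ is the whole torus, whose Euler characteristic is $0$. So $d=1$ contributes $0$. This matches the derivation of \eqref{equchiord9}, where the identity term drops out because $\chi(V_{\mathbb{C}}/L)=0$.
\item The prime powers $d=p^k$ with $k\ge1$ contribute $\varphi(p^k)\,p^{n/\varphi(p^k)}$.
\item The divisors $d$ with $\omega(d)\ge2$ contribute $\varphi(d)\cdot 1$.
\end{itemize}

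As in the proof of Theorem~\ref{thm36cardpif}, I would use $\sum_{d\mid m}\varphi(d)=m$ to rewrite the third class as
\[\sum_{\omega(d)\ge 2}\varphi(d)=m-1-\sum_{p^k\mid m}\varphi(p^k).\]
Adding the prime-power terms then gives
\[\chi_{\mathrm{orb}}=\frac{1}{m}\Bigl(m-1+\sum_{p^k\mid m}\varphi(p^k)\bigl(p^{n/\varphi(p^k)}-1\bigr)\Bigr),\]
which is the claimed formula.

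As a cross-check, I would also subtract $\frac1m|F|$ from Theorem~\ref{thmcardf34} from $|\pi(F)|$ in Theorem~\ref{thm36cardpif}. The terms $1-\omega(m)+\sum_p p^{n/(p-1)}$, divided by $m$, cancel, and $1$ is written as $\frac{m-1}{m}+\frac1m$. I would reconcile the stray $\frac1m$ against the convention for the $d=1$ term.

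The only real obstacle is this convention: the case $d=1$, where $\Phi_1(1)^{n/\varphi(1)}=0^n$. I would make explicit that the identity contributes $\chi(V_{\mathbb{C}}/L)=0$ rather than $1$. This consistency is where an off-by-$\frac1m$ error could creep in, so I would fix it first by reading Corollary~\ref{cor:eulerg37} with the identity term equal to $0$.
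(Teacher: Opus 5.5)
Your main argument is correct. Setting $c_d(C_m)=1$ in Corollary~\ref{cor:eulerg37}, the $d=1$ term is $0^n=0$. Splitting the remaining divisors into prime powers and those with $\omega(d)\ge 2$, and using $\sum_{d\mid m}\varphi(d)=m$, gives exactly the stated formula.

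The paper takes a slightly different route, the one you relegate to a cross-check. It subtracts $\frac1m|F|$ (Theorem~\ref{thmcardf34}) from $|\pi(F)|$ (Theorem~\ref{thm36cardpif}) via \eqref{equchiord9}. That is a one-line subtraction of two closed forms already proved. However, it passes through the M\"obius-inversion count of $|F|$, whose contribution then cancels completely. Your route specializes the general element-count formula and never touches $|F|$. This makes it transparent that $\chi_{\mathrm{orb}}$ depends only on the number of elements of each order. The divisor bookkeeping you do is the same as in the proof of Theorem~\ref{thm36cardpif}, and the two routes are logically equivalent, since Corollary~\ref{cor:eulerg37} was itself obtained from \eqref{equchiord9}.

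One correction to your cross-check: there is no stray $\frac1m$. Write $\Sigma=\sum_{p^k\mid m}\varphi(p^k)\bigl(p^{n/\varphi(p^k)}-1\bigr)$. Then exactly
\[|\pi(F)|-\tfrac1m|F|=\Bigl(1-\tfrac{\omega(m)}{m}+\tfrac1m\sum_{p\mid m}p^{\frac{n}{p-1}}+\tfrac1m\Sigma\Bigr)-\tfrac1m\Bigl(1-\omega(m)+\sum_{p\mid m}p^{\frac{n}{p-1}}\Bigr)=\tfrac{m-1}{m}+\tfrac1m\Sigma .\]
The $\frac1m$ you split off from $1$ is precisely what cancels the constant $1$ in $|F|$. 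So the two routes agree with no convention adjustment.

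The $d=1$ issue is also not a real obstacle. The paper lists $\Phi_1(1)=0$, so the identity term vanishes automatically in Corollary~\ref{cor:eulerg37}, consistent with $\chi(V_{\mathbb{C}}/L)=0$ in \eqref{equchiord9}.
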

\begin{proof}
This follows from \eqref{equchiord9} together with Theorems~\ref{thmcardf34} and~\ref{thm36cardpif}.
\end{proof}

We next study FPF actions of the generalized quaternion group $Q_{4m}.$ Recall that $Q_{4m}$ is defined by the presentation
\[Q_{4m}=\langle a,b\mid a^{2m}=1, b^{2}=a^{m}, bab^{-1}=a^{-1}\rangle.\]
One may easily verify the subgroup classification of $Q_{4m}.$ Every subgroup of $ Q_{4m} $ is of one of the following types:
\begin{itemize}
\item Cyclic subgroups contained in $\langle a\rangle $: $\langle a^{2m/d}\rangle$ of order $d,$ with $d\mid 2m;$
\item Cyclic subgroups not contained in $\langle a\rangle$: $\langle a^kb\rangle$ of order $4,$ with $0\leq k<m;$
\item Non-cyclic subgroups: $\langle a^{m/d},a^kb\rangle$ with $d\mid m,d\ne 1$ and $0\leq k<m/d,$ which are isomorphic to $Q_{4d}.$
\end{itemize}

\begin{theorem}\label{thm:q4mnonf62}
For an FPF $Q_{4m}$-action on $V_{\mathbb{C}}/L,$ the number of non-free points is  
\[|F|=1-\omega(2m)+\sum_{p\mid 2m}p^{\frac{n}{p-1}},\]
where $p$ ranges over the distinct prime divisors of $2m.$
\end{theorem}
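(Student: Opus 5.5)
Apply Theorem~\ref{thmnff35} to $G=Q_{4m}$, for which $|G|=4m$. By the subgroup classification recalled above, every cyclic subgroup either lies in $\langle a\rangle$ or is one of the $m$ subgroups $\langle a^kb\rangle$ of order $4$. Hence for $d\mid 4m$ the count of cyclic subgroups is
\[c_d(Q_{4m})=\begin{cases}1, & d\mid 2m,\ d\neq 4,\\ 1+m, & d=4 \text{ and } 4\mid 2m,\\ m, & d=4 \text{ and } 4\nmid 2m,\\ 0, & \text{otherwise}.\end{cases}\]
We never need the exact value of $c_4$, because $\mu(4)=0$. Thus the only divisors that contribute to the sum in Theorem~\ref{thmnff35} are the square-free $d$. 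A square-free divisor of $4m$ already divides $2m$, since $2m$ contains the factor $2$ together with every odd prime of $m$. For every such $d$ we have $c_d=1$.

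It follows that the formula of Theorem~\ref{thmnff35} becomes
\[|F|=-\sum_{d\mid 2m}\mu(d)\,\Phi_d(1)^{\frac{n}{\varphi(d)}},\]
which is exactly expression \eqref{eq2:cyclmobius} with $m$ replaced by $2m$. The rest of the argument is the same simplification as in the proof of Theorem~\ref{thmcardf34}:
\begin{itemize}
\item The term $d=1$ contributes nothing, since $\Phi_1(1)=0$.
\item The terms $d=p$ prime contribute $p^{n/(p-1)}$ each.
\item For square-free $d$ with at least two prime factors, $\Phi_d(1)=1$, so these terms contribute $-\sum_{r\ge2}\binom{\omega(2m)}{r}(-1)^r=1-\omega(2m)$.
\end{itemize}
Adding these gives $|F|=1-\omega(2m)+\sum_{p\mid 2m}p^{n/(p-1)}$.

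The only point needing care is that the non-cyclic subgroups $Q_{4d}$ can be discarded. This is Lemma~\ref{lemmuvsh34}, which applies because $Q_{4m}$ is FPF via Example~\ref{exa:q4mfpf} and therefore satisfies the $pq$-condition. One should also check that the exponent $n/\varphi(d)$ is an integer for every prime-order element. This is guaranteed by Lemma~\ref{lem33detai}, or alternatively by Proposition~\ref{corozhcdimn}. I expect no real obstacle beyond observing that $\mu(4)=0$ kills the extra order-$4$ subgroups.
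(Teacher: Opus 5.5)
Your proposal is correct and follows the paper's proof. Theorem~\ref{thmnff35} reduces the sum to square-free $d\mid 2m$, where $c_d(Q_{4m})=1$; this recovers \eqref{eq2:cyclmobius} with $2m$ in place of $m$, and the simplification from Theorem~\ref{thmcardf34} finishes the argument. You also make explicit two points the paper leaves tacit: that $\mu(4)=0$ disposes of the $m$ extra cyclic subgroups $\langle a^kb\rangle$ of order $4$, and that every square-free divisor of $4m$ already divides $2m$.
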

\begin{proof}
For each square-free $d\mid 2m,$ we have $c_d=1.$ By Theorem~\ref{thmnff35}, we obtain
\[|F|=-\sum_{d\mid 2m}\mu(d)\cdot\Phi_d(1)^{\frac{n}{\varphi(d)}},\]
which recovers exactly the expression \eqref{eq2:cyclmobius}. Applying the result of Theorem~\ref{thmcardf34}, the assertion follows.
\end{proof}

\begin{theorem}\label{thmq4msgq63}
For an FPF $Q_{4m}$-action on $V_{\mathbb{C}}/L,$ the number of singular points of the quotient space $V_{\mathbb{C}}/(Q_{4m}\ltimes L)$ is given by 
\[|\pi(F)|=2^{\frac{n}{2}-1}+\frac{1}{2}-\frac{\omega(2m)}{4m}+\frac{1}{4m}\sum_{p\mid 2m}p^{\frac{n}{p-1}}+\frac{1}{4m}\sum_{p^k\mid 2m} \varphi(p^k)\left(p^{\frac{n}{\varphi(p^k)}}-1\right),\]
where $p$ ranges over the distinct prime divisors of $2m,$ and $p^k$ ranges over all prime power divisors of $2m.$
\end{theorem}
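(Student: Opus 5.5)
The plan is to apply Theorem~\ref{thmpif36} with $G=Q_{4m}$:
\[|\pi(F)|=\frac{|F|}{4m}+\frac{1}{4m}\sum_{d\mid 4m}\varphi(d)\,c_d(Q_{4m})\,\Phi_d(1)^{\frac{n}{\varphi(d)}}.\]
For the first term we take $|F|$ from Theorem~\ref{thm:q4mnonf62}. This gives exactly the summands $-\frac{\omega(2m)}{4m}+\frac{1}{4m}\sum_{p\mid 2m}p^{\frac{n}{p-1}}$, together with a leftover $\frac{1}{4m}$ that will be absorbed in the end.

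The second term is the sum $\sum_{g\in Q_{4m}}|\det(g-\mathrm{id})|$, with the identity contributing $0$. We split it according to the subgroup classification listed above.

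\emph{Elements of $\langle a\rangle$.} There are $2m$ of them, and they are distributed exactly as in a cyclic group $C_{2m}$. As in the proof of Theorem~\ref{thm36cardpif} with $m$ replaced by $2m$, their contribution is
\[\sum_{p^k\mid 2m}\varphi(p^k)\,p^{\frac{n}{\varphi(p^k)}}+\Bigl(2m-1-\sum_{p^k\mid 2m}\varphi(p^k)\Bigr),\]
using $\sum_{d\mid 2m}\varphi(d)=2m$. Rewritten, this equals $2m-1+\sum_{p^k\mid 2m}\varphi(p^k)\bigl(p^{\frac{n}{\varphi(p^k)}}-1\bigr)$.

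\emph{Elements outside $\langle a\rangle$.} These are the $2m$ elements $a^kb$, each of order $4$. For each of them, equation~\eqref{eqfix31det} gives $|\det(g-\mathrm{id})|=\Phi_4(1)^{n/2}=2^{n/2}$. Their total contribution is therefore $2m\cdot 2^{n/2}$, which after dividing by $4m$ becomes $2^{\frac n2-1}$.

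Combining the pieces, the constants are
\[\frac{1}{4m}\bigl(1+(2m-1)\bigr)=\frac12,\]
where the $1$ comes from the leftover in $|F|$ (the $1$ in $1-\omega(2m)$). This yields the stated formula.

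The main point to check carefully is the order-$4$ bookkeeping. When $4\mid 2m$, the element $a^{m/2}$ also has order $4$, but it lies in $\langle a\rangle$ and is already counted in the cyclic part. The $2m$ elements $a^kb$ are counted separately, so the two pieces do not overlap. We should also confirm that $n/\varphi(4)=n/2$ is an integer; this holds because $n$ is even by Proposition~\ref{corozhcdimn}. Apart from this, the proof is routine substitution.
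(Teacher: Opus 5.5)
Your proposal is correct and follows essentially the paper's route: both split $Q_{4m}$ into $\langle a\rangle$, which contributes exactly as $C_{2m}$ does, and the $2m$ order-$4$ elements $a^kb$, which contribute $2^{\frac n2-1}$ after dividing by $4m$. The only difference is packaging. The paper uses the combined $(\varphi(d)-\mu(d))$ form of Theorem~\ref{thmpif36} and recognizes the $\langle a\rangle$-part as $\tfrac12$ of the $C_{2m}$ singular-point count from Theorem~\ref{thm36cardpif}, whereas you use the Burnside form with $|F|$ from Theorem~\ref{thm:q4mnonf62} and redo the cyclic sum directly.
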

\begin{proof}
Now, the cyclic subgroups of $Q_{4m}$ are of two types. When $d=4,$
\[\frac{1}{4m}(\varphi(4)-\mu(4))m\Phi_{4}(1)^{\frac{n}{\varphi(4)}}=2^{\frac{n}{2}-1}.\]
By Theorem~\ref{thmpif36},
\begin{align*}
|\pi(F)|=2^{\frac{n}{2}-1}+\frac{1}{4m}\sum_{d\mid 2m} (\varphi(d)-\mu(d))\cdot\Phi_{d}(1)^{\frac{n}{\varphi(d)}}=2^{\frac{n}{2}-1}+\frac{1}{2}s,
\end{align*}
where $s$ is the number of singular points of $V_{\mathbb{C}}/(C_{2m}\ltimes L)$ given by Theorem~\ref{thm36cardpif}. This completes the proof.
\end{proof}

\begin{corollary}
For an FPF $Q_{4m}$-action on $V_{\mathbb{C}}/L,$ the orbifold Euler characteristic of $V_{\mathbb{C}}/(Q_{4m}\ltimes L)$ is given by
\[\chi_{\mathrm{orb}}(V_{\mathbb{C}}/(Q_{4m}\ltimes L))=2^{\frac{n}{2}-1}+\frac{1}{2}-\frac{1}{4m}+\frac{1}{4m}\sum_{p^k\mid 2m} \varphi(p^k)\left(p^{\frac{n}{\varphi(p^k)}}-1\right),\]
where $p^k$ ranges over all prime power divisors of $2m.$
\end{corollary}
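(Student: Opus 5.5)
The corollary should follow from equation \eqref{equchiord9}, which says $\chi_{\mathrm{orb}} = |\pi(F)| - |F|/|G|$, with $|G| = 4m$. I will subtract the value of $|F|/(4m)$ from Theorem~\ref{thm:q4mnonf62} from the value of $|\pi(F)|$ in Theorem~\ref{thmq4msgq63}. The terms $-\frac{\omega(2m)}{4m}$ and $\frac{1}{4m}\sum_{p\mid 2m}p^{\frac{n}{p-1}}$ appear in both expressions, so they cancel. What remains is
\[2^{\frac n2-1}+\frac12-\frac{1}{4m}+\frac{1}{4m}\sum_{p^k\mid 2m}\varphi(p^k)\Bigl(p^{\frac{n}{\varphi(p^k)}}-1\Bigr),\]
and the $-\frac{1}{4m}$ is the constant $1$ coming from $|F| = 1 - \omega(2m) + \sum_{p \mid 2m} p^{\frac{n}{p-1}}$.

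As a cross-check, I will also compute the result directly from Corollary~\ref{cor:eulerg37}. For this I need the number of elements of each order in $Q_{4m}$, which I read off from the subgroup classification listed above.

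The elements outside $\langle a\rangle$ are the $2m$ elements $a^kb$, and each has order $4$. Each contributes $\Phi_4(1)^{n/2} = 2^{n/2}$, so together they give $\frac{2m\cdot 2^{n/2}}{4m} = 2^{\frac n2-1}$.

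The elements of $\langle a\rangle \cong C_{2m}$ contribute $\frac{1}{4m}\sum_{d\mid 2m}\varphi(d)\Phi_d(1)^{n/\varphi(d)}$. By the computation in the proof of Theorem~\ref{thm36cardpif}, this sum equals $\frac{1}{4m}\Bigl(2m-1+\sum_{p^k\mid 2m}\varphi(p^k)\bigl(p^{n/\varphi(p^k)}-1\bigr)\Bigr)$. That gives $\frac12-\frac1{4m}$ plus the stated prime-power sum.

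One point needs care, and it is the only real obstacle. The elements of order $4$ inside $\langle a\rangle$ (these exist when $4\mid 2m$) must not be counted twice. They are handled in the $\langle a\rangle$-sum, while the $a^kb$ are counted separately, and the two sets are disjoint. So the count is consistent.

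Both routes give the stated formula.
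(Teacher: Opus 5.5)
Your proposal is correct and follows the paper's proof exactly: apply \eqref{equchiord9} with $|G|=4m$, subtracting $|F|/(4m)$ from Theorem~\ref{thm:q4mnonf62} from $|\pi(F)|$ in Theorem~\ref{thmq4msgq63}, so the $\omega(2m)$ and $\sum_{p\mid 2m}p^{n/(p-1)}$ terms cancel and the constant $1$ in $|F|$ yields the $-\frac{1}{4m}$. Your cross-check via Corollary~\ref{cor:eulerg37} is also valid: the $2m$ elements $a^kb$ of order $4$ give $2^{\frac n2-1}$, and $\langle a\rangle\cong C_{2m}$ gives the remaining terms.
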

\begin{proof}
This follows from \eqref{equchiord9} together with Theorems~\ref{thm:q4mnonf62} and~\ref{thmq4msgq63}.
\end{proof}

\section{Cyclic subgroup counts for FPF groups}\label{sec5:cycfpf}
In this section, we count $c_d(G),$ the number of cyclic subgroups of order $d$ of an FPF group $G.$ This is equivalent to counting $a_d(G),$ the number of elements of order $d,$ since $a_d(G)=\varphi(d)c_d(G).$

\subsection{Type \Romannum{1}-\Romannum{5}}

\begin{lemma}\label{lem:ordsh51}
Let $G=C_m\rtimes_\alpha H$ with $(m,|H|)=1.$ Write its elements as $(i,h)$ with multiplication
$(i,h_1)(j,h_2)=(i+\alpha(h_1)j,h_1h_2).$ For $h\in H,$ define
\[S_h:= \sum_{t=0}^{\operatorname{ord}(h)-1}\alpha(h)^t \pmod m.\]
Then for any $(i,h)\in G,$
\[\operatorname{ord}((i,h))=\frac{m\cdot\operatorname{ord}(h)}{(m,iS_h)}.\]
\end{lemma}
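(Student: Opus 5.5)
Let $k=\operatorname{ord}(h)$. The plan is to compute powers of $(i,h)$ explicitly, identify the exact power $(i,h)^{k}$, and then determine the order of that element inside $C_m$.

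\textbf{Step 1: powers of $(i,h)$.} By induction on $t$, using the multiplication rule, we get
\[(i,h)^t=\Bigl(i\sum_{s=0}^{t-1}\alpha(h)^s,\;h^t\Bigr).\]
The inductive step is $(i,h)^{t+1}=(i,h)(i\sum_{s<t}\alpha(h)^s,h^t)=(i+\alpha(h)\,i\sum_{s<t}\alpha(h)^s,h^{t+1})$.

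\textbf{Step 2: reduce to the element $(i,h)^k$.} Since the projection $G\to H$ is a homomorphism, $k$ divides $\operatorname{ord}((i,h))$. Step 1 gives $(i,h)^k=(iS_h,e)$, which lies in $C_m$. The order of $iS_h$ in $\mathbb{Z}/m\mathbb{Z}$ is $m/(m,iS_h)$. Therefore
\[\operatorname{ord}((i,h))=k\cdot\frac{m}{(m,iS_h)}.\]
To justify this, write $N=\operatorname{ord}((i,h))=kq$. Then $(i,h)^{N}=((i,h)^k)^q=(qiS_h,e)$, and $N$ is minimal exactly when $q$ is the order of $iS_h$. This uses that every power $(i,h)^t$ with $k\nmid t$ has nontrivial $H$-component, so it cannot be the identity.

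\textbf{Step 3: well-definedness of $S_h$ modulo $m$.} We should check that $S_h$ makes sense as a residue. This is immediate: $\alpha(h)\in(\mathbb{Z}/m\mathbb{Z})^\times$ and the sum is taken mod $m$, so $(m,iS_h)$ depends only on residues.

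None of these steps is a real obstacle. The only point needing care is the minimality argument in Step 2, namely that the $H$-coordinate forces $k\mid N$. The coprimality hypothesis $(m,|H|)=1$ is not actually needed for the formula itself; it matters only for later applications, where it lets one simplify $S_h$, e.g.\ $S_h\equiv k$ when $\alpha(h)=1$.
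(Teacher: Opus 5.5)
Your proof is correct and follows the paper's argument. You compute $(i,h)^t$ by induction, observe that $(i,h)^{\operatorname{ord}(h)}=(iS_h,e)\in C_m$, and read off the order of $iS_h$; your use of the projection $G\to H$ to force $\operatorname{ord}(h)\mid\operatorname{ord}((i,h))$ supplies the minimality step that the paper leaves implicit. You are also right that $(m,|H|)=1$ plays no role in the lemma itself. It matters later, for the unique factorization $d=d_1d_2$ in Theorem~\ref{thm:adgfpf52} and to make $S_h\equiv\operatorname{ord}(h)$ a unit modulo $m$ when $\alpha(h)=1$; the congruence $S_h\equiv\operatorname{ord}(h)$ itself needs no coprimality.
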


\begin{proof}
Induction gives $(i,h)^k=(i\sum_{t=0}^{k-1}\alpha(h)^t,h^k).$ Setting $k=\operatorname{ord}(h),$ we obtain
\[(i,h)^{\operatorname{ord}(h)}=(iS_h,e)\in C_m.\]
The order of $iS_h$ in $C_m$ equals $\frac{m}{(m,iS_h)}.$ This proves the claim.
\end{proof}

From this order formula we immediately deduce the following theorem.

\begin{theorem}\label{thm:adgfpf52}
Let $G=C_m\rtimes_\alpha H$ with $(m,|H|)=1.$ For any $\mid |G|,$ write uniquely $d = d_1 d_2,$ where $d_1 \mid m,d_2 \mid |H|.$ Then
\begin{equation}
a_{d_1d_2}(G)=\varphi(d_1)\sum_{\substack{h\in H\\ \operatorname{ord}(h)=d_2}}
(m,S_h) \left[d_1\mid \frac{m}{(m,S_h)}\right],
\end{equation}
where $[\cdot]$ denotes the indicator function, and $S_h$ is defined in Lemma~\ref{lem:ordsh51}.
\end{theorem}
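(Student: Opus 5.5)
We count elements of order $d_1d_2$ by fibering over the projection $G\to H$, $(i,h)\mapsto h$. Since $(m,|H|)=1$, Lemma~\ref{lem:ordsh51} gives $\operatorname{ord}((i,h))=\operatorname{ord}(h)\cdot m/(m,iS_h)$. The two factors are coprime: $\operatorname{ord}(h)$ divides $|H|$ and $m/(m,iS_h)$ divides $m$. So the decomposition $d=d_1d_2$ with $d_1\mid m$ and $d_2\mid |H|$ is unique, and $(i,h)$ has order $d_1d_2$ if and only if $\operatorname{ord}(h)=d_2$ and $m/(m,iS_h)=d_1$. This gives
\[a_{d_1d_2}(G)=\sum_{\substack{h\in H\\ \operatorname{ord}(h)=d_2}}\#\Bigl\{i\in\mathbb{Z}/m\mathbb{Z} \;\Bigm|\; \operatorname{ord}_{C_m}(iS_h)=d_1\Bigr\},\]
and it remains to evaluate the inner count for a fixed $h$.

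Put $s=(m,S_h)$. The map $i\mapsto iS_h$ is an endomorphism of the cyclic group $\mathbb{Z}/m\mathbb{Z}$. Its image is the unique subgroup $s\mathbb{Z}/m\mathbb{Z}$, which is cyclic of order $m/s$, and each fibre has size $s$ (the size of the kernel). The number of elements of order exactly $d_1$ in a cyclic group of order $m/s$ is $\varphi(d_1)$ if $d_1\mid m/s$ and $0$ otherwise. Pulling back, the inner count equals $s\cdot\varphi(d_1)\cdot[d_1\mid m/s]$. Summing over $h$ and factoring out $\varphi(d_1)$ gives the stated formula.

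The only point needing care is well-definedness of $S_h$ modulo $m$. It is defined as an integer residue, and $(m,iS_h)$ depends only on $iS_h \bmod m$, so it is unaffected by the choice of representative. The coprimality splitting above holds because $\operatorname{ord}(h)\mid |H|$ and $(m,|H|)=1$. Apart from this, the argument is a direct consequence of Lemma~\ref{lem:ordsh51}, and I do not expect any genuine obstacle beyond that fibre-counting step.
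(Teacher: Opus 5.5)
Your proof is correct and takes the paper's route. The paper states only that the formula follows immediately from Lemma~\ref{lem:ordsh51}. Your fibre count over $h$ supplies that omitted verification: the coprime splitting of the order, and the map $i\mapsto iS_h$ having image of order $m/(m,S_h)$ and fibres of size $(m,S_h)$.
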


Let $G=K\times \operatorname{SL}_2(\mathbb{F}_5)$ be of Type \Romannum{5}, where $K$ is of Type \Romannum{1} with $(|K|,30)=1.$ The order distribution of $\operatorname{SL}_2(\mathbb{F}_5)$ is given in the table below
\begin{table}[H]
  \centering
  \caption{Order distribution of $\operatorname{SL}_2(\mathbb{F}_5)$}
  \label{tab:sl2f5_order}
\begin{tabular}{|c|ccccccc|}
\hline 
$d$ & $1$ & $2$ & $3$ & $4$ & $5$ & $6$ & $10$ \\
\hline
$a_d$ & $1$ & $1$ & $20$ & $30$ & $24$ & $20$ & $24$ \\
\hline
\end{tabular}\end{table}
\noindent For each $d\mid 120|K|,$ write uniquely $d = d_1d_2$ with $d_1\mid |K|$ and $d_2\mid 120.$ Then 
\[a_{d_1d_2}(G)=\sum_{\substack{d_1\mid |K| \\d_2\mid 120}}a_{d_1}(K)\cdot a_{d_2}(\operatorname{SL}_2(\mathbb{F}_5)),\]
where $a_{d_1}(K)$ is given in Theorem~\ref{thm:adgfpf52}.

\subsection{Type \Romannum{6}}
Let $G=K\rtimes H$ be of Type \Romannum{6}, where $K$ is of Type \Romannum{1}, $H=\langle\operatorname{SL}_2(\mathbb{F}_5),S\rangle,$ and $(|K|,30)=1.$ We also write $G_1=K\times\operatorname{SL}_2(\mathbb{F}_5)$. The order distribution of $H$ is given in the table below

\begin{table}[H]
  \centering
  \caption{Order distribution of $\langle\operatorname{SL}_2(\mathbb{F}_5),S\rangle$}
  \label{tab:h_order}
\begin{tabular}{|c|ccccccccc|}
\hline 
$d$ &1&2&3&4&5&6&8&10&12  \\ 
\hline   
$a_d$ &1&1&20&50&24&20&60&24&40  \\
\hline
\end{tabular}\end{table}
\noindent Unlike Types \Romannum{1}-\Romannum{5}, the count for Type \Romannum{6} involves the automorphism of $K$ induced by $S$; nevertheless it admits a completely explicit formula, which we now derive. Let $K=C_m\rtimes C_n=\langle A,B\rangle$ with $A^m=B^n=1$ and $BAB^{-1}=A^r$, and set $\sigma=\operatorname{conj}_K(S)\in\operatorname{Aut}(K)$. Since $S^2=-I\in\operatorname{SL}_2(\mathbb{F}_5)$ and $\operatorname{SL}_2(\mathbb{F}_5)$ acts trivially on $K$, we have $\sigma^2=\mathrm{id}$. Replacing $S$ by a suitable element of the coset $G_1S$ if necessary, we may assume (cf.~\cite[Chapter~6]{wolf2011spaces})
\begin{equation}\label{equ:sigmaab}
\sigma(A)=A^s,\qquad \sigma(B)=B^t,
\end{equation}
where $s^2\equiv 1\pmod m$, $t^2\equiv 1\pmod n$, and $r^{t-1}\equiv 1\pmod m$. The subgroup $\operatorname{SL}_2(\mathbb{F}_5)\le H$ acts trivially on $K$, while $S$ acts through $\sigma$. Hence $G=G_1\sqcup G_1S$.

For each $d\mid 240|K|$ with unique decomposition $d = d_1d_2,$ where $d_1\mid |K|$ and $d_2\mid 120,$ the elements of $G_1$ contribute the Type \Romannum{5} count
\[a_{d_1d_2}(G_1)=a_{d_1}(K) a_{d_2}(\operatorname{SL}_2(\mathbb{F}_5)).\]
It remains to count the elements of the coset $G_1S$. Write $g=k xS$ with $k\in K$, $x\in\operatorname{SL}_2(\mathbb{F}_5)$, and set $h=xS\in H$. Then $h$ lies in the non-trivial coset $\operatorname{SL}_2(\mathbb{F}_5)\cdot S\subset H$.

\begin{lemma}\label{lem:coseto}
For any $h\in\operatorname{SL}_2(\mathbb{F}_5)\cdot S$, we have $\operatorname{ord}(h)\in\{4,8,12\}$.
\end{lemma}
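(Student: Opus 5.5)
The plan is to work inside the group $H=\langle \operatorname{SL}_2(\mathbb{F}_5),S\rangle$, which has order $240$. First I will identify it as the binary octahedral-type extension of $\operatorname{SL}_2(\mathbb{F}_5)$ by the outer automorphism $\theta$. Let $h=xS$ with $x\in\operatorname{SL}_2(\mathbb{F}_5)$. Then $h^2=x\,\theta(x)\,S^2=-x\theta(x)$ lies in $\operatorname{SL}_2(\mathbb{F}_5)$, so $\operatorname{ord}(h)=2\operatorname{ord}(h^2)$ if $h^2$ has odd order or if $h^2\ne$ the relevant power; more precisely, $\operatorname{ord}(h)$ is even and equals $2\operatorname{ord}(h^2)$. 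Indeed $h\notin \operatorname{SL}_2(\mathbb{F}_5)$ while every odd power of $h$ lies in the coset $\operatorname{SL}_2(\mathbb{F}_5)S$, so $h^k=1$ forces $k$ even.

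Hence the task reduces to determining which orders $h^2=-x\theta(x)$ can have in $\operatorname{SL}_2(\mathbb{F}_5)$. By Table~\ref{tab:sl2f5_order} these orders lie in $\{1,2,3,4,5,6,10\}$. I must show they are exactly $2,4,6$, giving $\operatorname{ord}(h)\in\{4,8,12\}$.

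\textbf{Excluding $\operatorname{ord}(h^2)=1$.} If $h^2=1$, then $h$ would be an involution different from $-I$ (since $h\notin\operatorname{SL}_2(\mathbb{F}_5)$). This contradicts Lemma~\ref{lem33pqcin}, because $H$ is FPF (it is the Type~\Romannum{6} group with trivial $K$) and hence satisfies the $pq$-condition.

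\textbf{Excluding odd-order factors.} If $\operatorname{ord}(h^2)\in\{3,5\}$ or $\operatorname{ord}(h^2)\in\{6,10\}$, then $\operatorname{ord}(h)\in\{6,10\}$ or $\{12,20\}$. For the order-$3$ case ($\operatorname{ord}(h)\in\{6,12\}$) there is nothing to exclude beyond $6$. The argument is as follows. Write $c=h^2$. The element $h$ centralizes $c$, so $c':=h^{\operatorname{ord}(h)/2}$ is the unique involution $-I$. An element $h$ of order $2q$ ($q$ odd prime) would give $h^2$ of order $q$ and $h^q$ an element of order $2$, namely $-I\in \operatorname{SL}_2(\mathbb{F}_5)$. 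But $q$ is odd, so $h^q$ lies in the coset $\operatorname{SL}_2(\mathbb{F}_5)S$, a contradiction. The same parity argument shows $\operatorname{ord}(h)$ must be divisible by $4$, since $h^{\operatorname{ord}(h)/2}=-I$ lies in $\operatorname{SL}_2(\mathbb{F}_5)$, forcing $\operatorname{ord}(h)/2$ to be even. This leaves $\operatorname{ord}(h)\in\{4,8,12,20\}$, corresponding to $\operatorname{ord}(h^2)\in\{2,4,6,10\}$.

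\textbf{Main obstacle: ruling out $\operatorname{ord}(h)=20$.} If $h$ had order $20$, then $h^4$ would be an element of order $5$ in $\operatorname{SL}_2(\mathbb{F}_5)$ centralized by $h$. The centralizer of an order-$5$ element $y$ in $\operatorname{SL}_2(\mathbb{F}_5)$ is cyclic of order $10$. Conjugation by $h$ induces $\theta$ up to an inner automorphism, and this swaps the two classes of elements of order $5$ (the two spin characters have values $(-1\pm\sqrt5)/2$ on them, and $\theta$ exchanges the spin characters). Since $h$ commutes with $h^4=y$, we get $\theta'(y)=y$ for an automorphism $\theta'$ in the outer class of $\theta$. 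This is impossible, because $\theta'$ maps $y$ into the other conjugacy class.

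Alternatively, one can simply compare with Table~\ref{tab:h_order}. It lists no element of order $20$, and $50-30=20$, $60$, $40$ elements of orders $4,8,12$ sum to $120=|\operatorname{SL}_2(\mathbb{F}_5)S|$. A direct enumeration of $-x\theta(x)$ with the given matrix $\begin{pmatrix}0&-1\\2&0\end{pmatrix}$ would verify this table. I would present the conjugacy-class argument and mention the count as a consistency check.
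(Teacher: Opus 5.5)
Your proof is correct, but it takes a different route from the paper. The paper argues by direct computation. It writes out $u=x\theta(x)$ entrywise, finds $\operatorname{tr}(u)=2+3(b+2c)^2\in\{0,2,4\}$, and uses Cayley--Hamilton to get $\operatorname{ord}(u)\in\{1,3,4\}$; in the case $\operatorname{tr}(u)=2$ it notes that $u$ is scalar, hence $u=I$. This gives $\operatorname{ord}(h^2)=\operatorname{ord}(-u)\in\{2,6,4\}$.

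You argue structurally. Odd powers of $h$ stay in the coset $\operatorname{SL}_2(\mathbb{F}_5)S$, while $h^{\operatorname{ord}(h)/2}$ is the unique involution $-I\in\operatorname{SL}_2(\mathbb{F}_5)$, so $4\mid\operatorname{ord}(h)$. This observation alone rules out $\operatorname{ord}(h^2)\in\{1,3,5\}$, so your separate $2q$ discussion is redundant. You then exclude order $20$: $h$ would centralize $h^4$ of order $5$, whereas $\operatorname{Ad}_h=\operatorname{Ad}_x\circ\theta$ on $\operatorname{SL}_2(\mathbb{F}_5)$ sends each element of order $5$ into the other conjugacy class.

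The paper's computation is self-contained and uses only the defining matrix of $\theta$. Yours explains the answer (the coset forces $4\mid\operatorname{ord}(h)$, and the outerness of $\theta$ kills the $5$-part), but it imports two facts you should state explicitly.

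\emph{Unique involution of $H$.} Citing Lemma~\ref{lem33pqcin} is legitimate, since $H$ is FPF as a subgroup of the FPF group $G$. You can also check it directly. With $M=\begin{pmatrix}0&-1\\2&0\end{pmatrix}$ and $N=xM$, $\det N=2$, the equation $h^2=I$ becomes $N^2=2I$. Cayley--Hamilton then gives $\operatorname{tr}(N)N=4I$, so $\operatorname{tr}(N)^2=3$, which is not a square in $\mathbb{F}_5$.

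\emph{Class swap.} The paper only asserts it, in the form ``$\theta$ exchanges the spin characters.'' It is cleaner to note that $\det M=2$ is a non-square mod $5$. Concretely, $\theta$ sends $\begin{pmatrix}1&1\\0&1\end{pmatrix}$ to $\begin{pmatrix}1&0\\3&1\end{pmatrix}$, which is $\operatorname{SL}_2(\mathbb{F}_5)$-conjugate to $\begin{pmatrix}1&2\\0&1\end{pmatrix}$ but not to $\begin{pmatrix}1&1\\0&1\end{pmatrix}$.

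You are right to use Table~\ref{tab:h_order} only as a consistency check, since the paper states that table without proof.
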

\begin{proof}
Write $h=xS$ with $x\in\operatorname{SL}_2(\mathbb{F}_5).$ Using $S^2=-I$ and $SAS^{-1}=\theta(A)$, we have
\[h^2=xSxS=x(SxS^{-1})S^2=-x\theta(x),\]
where $\theta=\operatorname{Ad}_S$ is the involution on $\operatorname{SL}_2(\mathbb{F}_5)$ introduced in Theorem \ref{thm:FPF_classifi}. So $h^2\in\operatorname{SL}_2(\mathbb{F}_5).$ If $\operatorname{ord}(h)$ is odd, say $2k+1$, then $h=(h^2)^{k+1}\in\operatorname{SL}_2(\mathbb{F}_5)$, contradicting $h\notin\operatorname{SL}_2(\mathbb{F}_5)$. Hence $\operatorname{ord}(h)$ is even.

We next find $\operatorname{ord}(h^2).$ Put $u:=x\theta(x)$. Write $x=\begin{pmatrix}a&b\\c&d\end{pmatrix}\in\operatorname{SL}_2(\mathbb{F}_5)$, one has
\begin{equation}\label{eq:u}
u=\begin{pmatrix}a&b\\c&d\end{pmatrix}\begin{pmatrix}0&-1\\2&0\end{pmatrix}
 \begin{pmatrix}a&b\\c&d\end{pmatrix}
 \begin{pmatrix}0&3\\4&0\end{pmatrix}
=\begin{pmatrix}ad+3b^2 & 2ac+ab \\[2pt] cd+3bd & 2c^2+ad\end{pmatrix}.
\end{equation}
Using $ad-bc=1$,
\begin{equation}\label{eq:trace}
\operatorname{tr}(u)=2ad+3b^2+2c^2=2+3(b+2c)^2.
\end{equation}
Because the squares in $\mathbb{F}_5$ are $\{0,1,4\}$, we see that $\operatorname{tr}(u)\in\{2,0,4\}.$

We may read off $\operatorname{ord}(u)$ from $\operatorname{tr}(u)$. Since $\det(u)=1,$ the Cayley--Hamilton theorem gives $u^2-\operatorname{tr}(u)u+I=0$. We claim that $\operatorname{ord}(u)\in\{1,3,4\}$:
\begin{itemize}
\item If $\operatorname{tr}(u)=2$, then by \eqref{eq:trace} we have $b+2c=0$. From \eqref{eq:u}, $u=(ad+2c^2)I$ is scalar. Then $\det(u)=1$ forces $u=\pm I$, and $\operatorname{tr}(u)=2$ forces $u=I$. Hence $\operatorname{ord}(u)=1$.
\item If $\operatorname{tr}(u)=0$, then $u^2+I=0$. Thus $u^4=I$ and $\operatorname{ord}(u)=4$.
\item If $\operatorname{tr}(u)=4$, then $u^2+u+I=0$. Multiplying by $u-I$
gives $u^3-I=0$, so $\operatorname{ord}(u)=3$.
\end{itemize}

Since $\operatorname{ord}(-u)=2\operatorname{ord}(u)$ when $\operatorname{ord}(u)$ is odd and
$\operatorname{ord}(-u)=\operatorname{ord}(u)$ when $\operatorname{ord}(u)$ is even, we have
\[\operatorname{ord}(h^2)=\operatorname{ord}(-u)=
\begin{cases}
2, & \operatorname{ord}(u)=1,\\
6, & \operatorname{ord}(u)=3,\\
4, & \operatorname{ord}(u)=4,
\end{cases}\]
and hence $\operatorname{ord}(h)=2\operatorname{ord}(h^2)\in\{4,8,12\}.$
\end{proof}

Comparing Table \ref{tab:sl2f5_order} and Table \ref{tab:h_order}, the order distribution of the coset $\operatorname{SL}_2(\mathbb{F}_5)\cdot S$ is

\begin{table}[H]
  \centering
  \caption{Order distribution of $\operatorname{SL}_2(\mathbb{F}_5)\cdot S$}
  \label{tab:coset_order}
\begin{tabular}{|c|ccc|}
\hline
$d$ & $4$ & $8$ & $12$ \\
\hline
$a_{d}(\operatorname{SL}_2(\mathbb{F}_5)\cdot S)$ & $20$ & $60$ & $40$\\
\hline
\end{tabular}\end{table}

Let $g=kh$ with $k\in K$ and $h\in\operatorname{SL}_2(\mathbb{F}_5)\cdot S$. A straightforward induction yields
\[g^r=k\sigma(k)\sigma^2(k)\cdots\sigma^{r-1}(k)h^r.\]
Since $\sigma^2=\operatorname{id}$ and $\operatorname{ord}(h)$ is even, we have 
\begin{equation}\label{lem:cosetorder}
\operatorname{ord}(g)=\operatorname{ord}(h)\cdot\operatorname{ord}(k\sigma(k)).
\end{equation}
For each $d\mid |K|,$ define 
\[b_{d}(\sigma)=\#\{k\in K\mid \operatorname{ord}(k\sigma(k))=d\}.\]
We now compute $b_d(\sigma)$ explicitly.

\begin{lemma}\label{lem:bdsigma}
Write $k=A^iB^j$ with $0\leq i<m$ and $0\leq j<n$, and let $v_j\equiv j(1+t)\pmod n$. Put
\[e_j=\frac{n}{(n,\,j(1+t))},\qquad u_j=1+sr^j,\qquad S_j=\sum_{q=0}^{e_j-1}r^{v_jq}\bmod m,\qquad \tau_j=(m,\,u_jS_j).\]
(Thus $e_j$ is the order of $B^{v_j}$, and $S_j=S_{B^{v_j}}$ in the notation of Lemma~\ref{lem:ordsh51}.) Then the number of $i\in\mathbb{Z}/m\mathbb{Z}$ for which $\operatorname{ord}(A^{iu_j}B^{v_j})=d$ equals
\[\tau_j\,\varphi\Bigl(\frac{d}{e_j}\Bigr)\quad\text{if } e_j\mid d \text{ and } \frac{d}{e_j}\mid\frac{m}{\tau_j},\]
and equals $0$ otherwise. Consequently
\begin{equation}\label{equ:bdsigma}
b_d(\sigma)=\sum_{j=0}^{n-1}\frac{m}{\tau_j}\,\varphi\Bigl(\frac{d}{e_j}\Bigr)\Bigl[\,e_j\mid d\ \text{ and }\ \frac{d}{e_j}\mid\frac{m}{\tau_j}\,\Bigr],
\end{equation}
where $[\cdot]$ denotes the indicator function.
\end{lemma}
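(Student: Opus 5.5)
The plan is to reduce $\operatorname{ord}(k\sigma(k))$ to the order formula of Lemma~\ref{lem:ordsh51} for the Type~\Romannum{1} group $K=C_m\rtimes C_n$. Then, for each fixed $j$, I count the residues $i$ by a gcd argument. Finally I sum over $j$ to obtain \eqref{equ:bdsigma}.

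\emph{Step 1: normal form of $k\sigma(k)$.} Take $k=A^iB^j$. By \eqref{equ:sigmaab}, $\sigma(k)=A^{is}B^{jt}$. Using $B^jA^{x}B^{-j}=A^{xr^j}$, we get
\[k\sigma(k)=A^iB^jA^{is}B^{jt}=A^{i+isr^j}B^{j+jt}=A^{iu_j}B^{v_j}.\]
This is the element appearing in the statement. Here $B^{v_j}$ has order $e_j=n/(n,j(1+t))$.

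\emph{Step 2: its order.} Apply Lemma~\ref{lem:ordsh51} with $H=\langle B\rangle$, $h=B^{v_j}$ and $\alpha(h)=r^{v_j}$. This gives $S_h=S_j$ and
\[\operatorname{ord}(A^{iu_j}B^{v_j})=e_j\cdot\frac{m}{(m,\,iu_jS_j)}.\]
Hence the order equals $d$ exactly when $e_j\mid d$ and $\operatorname{ord}_{\mathbb{Z}/m}(iu_jS_j)=d/e_j$.

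\emph{Step 3: counting the residues $i$.} Put $c=u_jS_j$, $\tau_j=(m,c)$ and $m'=m/\tau_j$. Then $(m',c/\tau_j)=1$, and $(m,ic)=\tau_j\,(m',\,i\cdot c/\tau_j)=\tau_j\,(m',i)$. So the additive order of $ic$ in $\mathbb{Z}/m$ equals the order of $i$ in $\mathbb{Z}/m'$. The reduction $\mathbb{Z}/m\to\mathbb{Z}/m'$ has fibres of size $\tau_j$, and $\mathbb{Z}/m'$ has exactly $\varphi(d')$ elements of order $d'$ when $d'\mid m'$ and none otherwise. Therefore the number of $i$ giving order $d$ is $\tau_j\varphi(d/e_j)$ under the stated divisibility conditions, and $0$ otherwise. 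This is the first assertion. As a check, summing over all $d'\mid m'$ gives $\tau_j\cdot m'=m$, as it must.

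\emph{Step 4: summing over $j$.} We have $b_d(\sigma)=\sum_{j=0}^{n-1}\#\{i:\operatorname{ord}(A^{iu_j}B^{v_j})=d\}$, since $k\mapsto(i,j)$ is a bijection $K\to\mathbb{Z}/m\times\mathbb{Z}/n$. Inserting the count from Step~3 yields \eqref{equ:bdsigma}.

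\emph{Main obstacle.} The delicate point is the coefficient in front of $\varphi(d/e_j)$ in \eqref{equ:bdsigma}. Step~3 produces the fibre size $\tau_j$, whereas the printed formula has $m/\tau_j$. The two agree only when $\tau_j^2=m$. In general the version with $\tau_j$ is forced, since only it gives $\sum_d(\text{count})=m$ for each $j$. I would therefore check the printed coefficient against a small example, such as $K=C_m$ with $n=1$ and $\sigma(A)=A^{-1}$. For every $i$ this gives $k\sigma(k)=1$, so $b_1=m$, which matches $\tau_j=m$ rather than $m/\tau_j=1$. If the intended reading of $\tau_j$ is not the one I assume, I would read \eqref{equ:bdsigma} with the fibre size of Step~3 as its coefficient.
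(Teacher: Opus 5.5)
Your argument matches the paper's proof step for step: the normal form $k\sigma(k)=A^{iu_j}B^{v_j}$, Lemma~\ref{lem:ordsh51} applied with $h=B^{v_j}$, the reduction $(m,iu_jS_j)=\tau_j\,(m/\tau_j,i)$, and the fibre count that gives $\tau_j\varphi(d/e_j)$ admissible residues $i$ for each $j$. You are also right about the coefficient: the paper's proof only establishes $\tau_j$ (its closing sentence passes to $m/\tau_j$ without justification), and Example~\ref{exa:type6} is computed with $\tau_j$ (for instance $b_1(\sigma)=29$ comes from $j=0$ with $\tau_0=29$, where $m/\tau_0=1$), so the factor $m/\tau_j$ in \eqref{equ:bdsigma} is a misprint for $\tau_j$.
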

\begin{proof}
By \eqref{equ:sigmaab} we have $\sigma(k)=A^{is}B^{jt}$, so
\[k\sigma(k)=A^iB^jA^{is}B^{jt}=A^{i(1+sr^j)}B^{j(1+t)}=A^{iu_j}B^{v_j},\]
where in the second step we used $B^jA^{is}=A^{isr^j}B^j$, which follows from $BAB^{-1}=A^r.$ Applying Lemma~\ref{lem:ordsh51} to $K=C_m\rtimes_\alpha C_n$ with $h=B^{v_j}$, whose order is $e_j$, we obtain
\[\operatorname{ord}(A^{iu_j}B^{v_j})=\frac{m\,e_j}{(m,\,iu_jS_j)}.\]
Write $u_jS_j\equiv\tau_j\tilde w_j\pmod m$ with $(\tilde w_j,\,m/\tau_j)=1$, where $\tau_j=(m,u_jS_j)$. Then $(m,iu_jS_j)=\tau_j\,(m/\tau_j,\,i)$, so with $M_j=m/\tau_j$,
\[\operatorname{ord}(A^{iu_j}B^{v_j})=\frac{M_je_j}{(M_j,\,i)}.\]
This equals $d$ if and only if $(M_j,i)=c$, where $c=M_je_j/d$; such an integer $c$ exists with $c\mid M_j$ precisely when $e_j\mid d$ and $d\mid M_je_j$, that is, when $e_j\mid d$ and $d/e_j\mid M_j$. Finally, for $c\mid M_j$ the number of $i\in\mathbb{Z}/m\mathbb{Z}$ with $(M_j,i)=c$ equals $\tau_j\varphi(M_j/c)$: indeed $i=ci'$ with $(i',M_j/c)=1$, and $i'$ runs over $\mathbb{Z}/(\tau_jM_j/c)\mathbb{Z}$, in which every residue class modulo $M_j/c$ occurs exactly $\tau_j$ times. Since $M_j/c=d/e_j$, the assertion follows. The last claim holds because $k\sigma(k)\in K$.
\end{proof}

\begin{theorem}\label{thm:type6ad}
Let $G$ be of Type \Romannum{6} and let $d\mid 240|K|$ be written uniquely as $d=d_1d_2$ with $d_1\mid |K|$ and $d_2\mid 120$. Then
\[a_{d_1d_2}(G)=a_{d_1}(K)\,a_{d_2}\bigl(\operatorname{SL}_2(\mathbb{F}_5)\bigr)+b_{d_1}(\sigma)\,a_{d_2}\bigl(\operatorname{SL}_2(\mathbb{F}_5)\cdot S\bigr),\]
where $a_{d_1}(K)$ is given by Theorem~\ref{thm:adgfpf52}, $b_{d_1}(\sigma)$ by \eqref{equ:bdsigma}, and $a_{d_2}$ by Tables~\ref{tab:sl2f5_order} and~\ref{tab:coset_order}; for the remaining $d$ we have $a_d(G)=0.$
\end{theorem}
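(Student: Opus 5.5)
The count splits along the disjoint decomposition $G=G_1\sqcup G_1S$, and the two terms of the formula are the contributions of the two cosets. For $g\in G_1=K\times\operatorname{SL}_2(\mathbb{F}_5)$, write $g=kx$ with $k\in K$ and $x\in\operatorname{SL}_2(\mathbb{F}_5)$. The two factors commute, and $|K|$ is coprime to $120$, so $\operatorname{ord}(g)=\operatorname{ord}(k)\operatorname{ord}(x)$. Conversely, every order $d\mid 240|K|$ factors uniquely as $d_1d_2$ with $d_1\mid|K|$ and $d_2\mid 240$. Hence the number of elements of $G_1$ of order $d_1d_2$ is $a_{d_1}(K)\,a_{d_2}(\operatorname{SL}_2(\mathbb{F}_5))$, with the second factor read off Table~\ref{tab:sl2f5_order} (zero for $d_2\notin\{1,2,3,4,5,6,10\}$). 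This is the Type~\Romannum{5} count already stated.

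For the coset, each $g\in G_1S$ is written uniquely as $g=kh$ with $k\in K$ and $h\in\operatorname{SL}_2(\mathbb{F}_5)S$; this is the parametrization fixed before Lemma~\ref{lem:coseto}. Lemma~\ref{lem:coseto} gives $\operatorname{ord}(h)\in\{4,8,12\}$, which is even. Equation~\eqref{lem:cosetorder} then gives $\operatorname{ord}(g)=\operatorname{ord}(h)\cdot\operatorname{ord}(k\sigma(k))$. I will recheck this step, since it is the main obstacle. Because $h$ normalizes $K$ and acts on it by $\sigma$, and $\sigma^2=\mathrm{id}$, we get $g^2=k\sigma(k)h^2$. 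Here $h^2\in\operatorname{SL}_2(\mathbb{F}_5)$ centralizes $K$, so $k\sigma(k)$ and $h^2$ commute, and their orders are coprime (one divides $|K|$, the other divides $120$). Therefore
\[\operatorname{ord}(g^2)=\operatorname{ord}(k\sigma(k))\operatorname{ord}(h^2).\]
Now $\operatorname{ord}(h^2)=\operatorname{ord}(h)/2$. Since $\operatorname{ord}(g)$ is even (its image in $G/G_1\cong C_2$ is nontrivial) and $\operatorname{ord}(k\sigma(k))$ is odd (because $|K|$ is odd), we obtain $\operatorname{ord}(g)=2\operatorname{ord}(g^2)=\operatorname{ord}(h)\operatorname{ord}(k\sigma(k))$, which confirms \eqref{lem:cosetorder}.

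With this multiplicativity in hand, the element $kh$ has order $d_1d_2$ exactly when $\operatorname{ord}(k\sigma(k))=d_1$ and $\operatorname{ord}(h)=d_2$, again by uniqueness of the coprime factorization. The pairs $(k,h)$ are independent, so the coset contributes $b_{d_1}(\sigma)\,a_{d_2}(\operatorname{SL}_2(\mathbb{F}_5)\cdot S)$. Here $b_{d_1}(\sigma)$ is given explicitly by Lemma~\ref{lem:bdsigma}, and the coset distribution comes from Table~\ref{tab:coset_order}. That table is obtained as Table~\ref{tab:h_order} minus Table~\ref{tab:sl2f5_order}, and this subtraction is consistent with Lemma~\ref{lem:coseto}.

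Adding the two contributions gives the displayed formula. The statement uses $d_2\mid 120$, and I interpret $d_2$ as the part of $d$ dividing $240$. If $d_2$ does not lie in the support of either table, both terms vanish. If $d$ does not divide $240|K|$, no element of $G$ can have order $d$, since element orders divide $|G|=240|K|$. This proves the final clause that $a_d(G)=0$ in the remaining cases.
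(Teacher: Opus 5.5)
Your proof is correct and follows the paper's argument. You split $G=G_1\sqcup G_1S$, use coprime multiplicativity of orders on $G_1$, and on the coset use $\operatorname{ord}(kh)=\operatorname{ord}(h)\operatorname{ord}(k\sigma(k))$ together with Lemma~\ref{lem:coseto}; your derivation of \eqref{lem:cosetorder} via $g^2=k\sigma(k)h^2$ is a cleaner justification than the paper's terse induction, and your reading of $d_2$ as the $240$-part of $d$ is the right one (the paper's own proof invokes $(|K|,240)=1$, and $a_8(G)\neq 0$ in Example~\ref{exa:type6} shows the condition $d_2\mid 120$ in the statement is a slip).
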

\begin{proof}
Since $G=G_1\sqcup G_1S$, we count the two parts separately. Let $k\in K$ and $x\in\operatorname{SL}_2(\mathbb{F}_5)$. As $|K|$ is coprime to $30$ and $\operatorname{ord}(x)\mid120$, we have $\operatorname{ord}(kx)=\operatorname{ord}(k)\operatorname{ord}(x)$, so the elements of $G_1$ of order $d$ are counted by $a_{d_1}(K)a_{d_2}(\operatorname{SL}_2(\mathbb{F}_5))$. For $h\in\operatorname{SL}_2(\mathbb{F}_5)\cdot S$, \eqref{lem:cosetorder} gives $\operatorname{ord}(kh)=\operatorname{ord}(k\sigma(k))\operatorname{ord}(h)$, and by Lemma~\ref{lem:coseto} we have $\operatorname{ord}(h)\in\{4,8,12\}$; hence the elements of $G_1S$ of order $d$ are counted by $b_{d_1}(\sigma)\,a_{d_2}(\operatorname{SL}_2(\mathbb{F}_5)\cdot S)$. Finally, $(|K|,240)=1$, so every divisor of $240|K|$ admits a unique decomposition $d=d_1d_2$ of the above form.
\end{proof}

\begin{example}\label{exa:type6}
Let $K=C_{29}\rtimes C_{49}=\langle A,B\mid A^{29}=B^{49}=1,\ BAB^{-1}=A^{16}\rangle$, so that $r=16$ has order $7$ in $(\mathbb{Z}/29\mathbb{Z})^{\times}$ and $(|K|,30)=1$. This is the smallest non-abelian $K$ occurring in Type \Romannum{6}: since $(|K|,30)=1$, every prime divisor $p$ of $d=\operatorname{ord}_m(r)>1$ satisfies $p^{2}\mid n$, so $p\ge7$, $n\geq49$, and $7\mid\varphi(m)$ forces $m\geq29$. Let $\sigma(A)=A^{-1}$ and $\sigma(B)=B$, that is, $s=28$ and $t=1$. Then $|K|=1421=7^2\cdot 29$, and \eqref{equ:bdsigma} gives
\[b_1(\sigma)=29,\qquad b_7(\sigma)=174,\qquad b_{49}(\sigma)=1218,\qquad b_{29}(\sigma)=b_{203}(\sigma)=b_{1421}(\sigma)=0,\]
while Theorem~\ref{thm:adgfpf52} gives $a_1(K)=1$, $a_7(K)=6$, $a_{29}(K)=28$, $a_{49}(K)=1218$ and $a_{203}(K)=168$. With Tables~\ref{tab:sl2f5_order} and~\ref{tab:coset_order} we obtain, for instance,
\[a_{4}(G)=1\cdot30+29\cdot20=610,\qquad a_{8}(G)=29\cdot60=1740,\]
\[a_{196}(G)=1218\cdot30+1218\cdot20=60900,\qquad a_{588}(G)=1218\cdot40=48720,\]
where $|G|=240\cdot1421=341040.$ Note that $\sum_{d\mid|K|}b_d(\sigma)=|K|$, which is consistent with $\sum_{d\mid 240|K|}a_d(G)=240|K|.$
\end{example}

Substituting $c_d(G)=a_d(G)/\varphi(d)$ into Theorems~\ref{thmnff35} and~\ref{thmpif36} and Corollary~\ref{cor:eulerg37}, we obtain closed-form expressions for the number $|F|$ of non-free points, the number $|\pi(F)|$ of singular points, and the orbifold Euler characteristic of $V_{\mathbb{C}}/(G\ltimes L)$ for every FPF group of Type \Romannum{6}. Together with Theorem~\ref{thm:type6ad}, this completes the explicit computation of the invariants of Section~\ref{sec:acts} for all six types.

\section*{Data availability}
No data were generated or analysed during this study.

\printbibliography

\end{document}